\newtheorem{theorem}{Theorem}[section]
\newtheorem{lemma}[theorem]{Lemma}
\theoremstyle{definition}
\newtheorem{cor}[theorem]{Corollary}
\newtheorem{remark}[theorem]{Remark}
\numberwithin{equation}{section}
\newcommand{\re}[1]{\mbox{Re}(#1)}
\begin{document}

\title{On computing some special values of multivariate hypergeometric functions}


\author{Giovanni Mingari Scarpello \footnote{giovannimingari@yahoo.it}
\and Daniele Ritelli \footnote{Dipartimento di Scienze Statistiche, via
Belle Arti, 41 40126 Bologna Italy, daniele.ritelli@unibo.it}}



\date{}


\maketitle

\begin{abstract}
This paper obtains several evaluations of multivariate hypergeometric functions for particular parameter values and at special algebraic points. They have a high interest not only on their own, but also in the light of the remarkable implications for both pure mathematics and  mathematical physics. Following our research started in \cite{jnt1} and \cite{jnt2}, we provide some contribution to such functions' computability inside and outside their disks of convergences.

In the first part we provide some new results in the spirit of theorem 3.1 of \cite{jnt2}, obtaining formulae for the values  of multivariate hypergeometric functions by generalizing a well known identity of Kummer, \cite{kummer}, to the hypergeometric functions of two or more variable like those of Appell and Lauricella denoted ${\rm F}_D^{(n)}.$ 

In the second part, using some reduction schemes of hyperelliptic integrals due to Goursat \cite{goursat}, Hermite \cite{her, herovres} we evaluate Appell and Lauricella's ${\rm F}_D^{(n)}$ hypergeometric functions and
their analytic continuations at some particular locations. 
Finally, by exploiting reductions of hyperelliptic integrals to elliptic due to Belokolos et al. \cite{Belokolos1986}, Eilbeck and Enol'skii \cite{eilbeck1994elliptic}, Enol'skii and Kostov \cite{enol1994geometry} and by Maier \cite{maier2008}, we obtain further links from multivariate hypergeometric
functions, to complete elliptic integrals and to  $\pi$. We thus
reach a conceptual settlement of the piece of research started by us in \cite{jnt1} and \cite{jnt2}. 

\vspace{0.5cm}

\noindent {\sc Keyword:} Reduction of Hyperelliptic Integrals; Complete Elliptic Integral of first kind; Hypergeometric Function; Appell Function; Lauricella  Function

\end{abstract}

\section{Introduction and aim of the paper}
The hypergeometric functions' theoretical evaluation has become of great importance in order to provide a set of exact values for testing the numerical algorithms of their computation. But the greatest importance comes from such an aim accomplished through mathematical technicalities of remarkable power and intrinsic interest.

It was by hypergeometric series that two of the most remarkable chapters of classical function theory have been originated at the end of the XIX century, namely that on linear differential equations (I. L. Fuchs), and automorphic functions (H. Poincar\'e, F. Klein).
Besides, F. Klein showed that from the hypergeometric  equation, solved by the hypergeometric  function, really comes out the majority of the linear differential equations of mathematical physics, if not all.

The uncountable applications of such functions to mathematical physics have enabled numerical analysts in order to improve the quality, reliability and celerity of their effective computations.
 Coming to more recent developments, we can cite for instance three  papers by Joyce and Zucker, \cite{jz1, jz2, jz3}, where the theory of singular elliptic moduli is employed for a computational purpose.

In order to provide some contribution to the computability of such functions, and
as a reference for some values of them inside and outside their disks of convergence, we will proceed here according to the aims of \cite{jnt1} and \cite{jnt2}. 

In the first part of our paper, we obtain some new results in the spirit of theorem 3.1 of \cite{jnt2} where we proved that, if $2b>a>0$
\begin{equation}
\label{lunga}
_{2}\mathrm{F}_{1}\left( \left. 
\begin{array}{c}
2b-a;b \\[2mm]
2b
\end{array}
\right| 2\right) =(-i)^{2b-a}\,\sqrt\pi\,\frac{\Gamma\left(b+\dfrac12\right)}{\Gamma\left(\dfrac{a+1}{2}\right)\,\Gamma\left(\dfrac{2b-a+1}{2}\right)}.
\end{equation}
The above was obtained by our \textit{double evaluation method} (namely in terms of Gamma function and using the integral representation theorem, see \eqref{irt2f1} below) applied to the improper integral
\[
\int_0^\infty\frac{t^{a-1}}{(1+t^2)^b}{\rm d}t.
\]
Equation \eqref{lunga} can also be got by a quadratic transformation formula for the Gauss function $_{2}\mathrm{F}_{1},$ see for instance  \cite{stegun} entry 15.3.15 p. 560:
\[
_{2}\mathrm{F}_{1}\left( \left. 
\begin{array}{c}
a;b \\[2mm]
2b
\end{array}
\right| x\right)=(1-x)^{-a/2}\,_{2}\mathrm{F}_{1}\left( \left. 
\begin{array}{c}
\frac{a}{2};b-\frac{a}{2} \\[2mm]
b+\frac12
\end{array}
\right| \frac{x^2}{4(x-1)}\right)
\]
taken at $x=2,$ for which $x^2/(4(x-1))$ equates 1, when the right
hand hypergeometric function is evaluated as a gamma quotient. Nevertheless by the \textit{double evaluation method} a sequence of identities is obtained, see theorem 3.2, \cite{jnt2} formulae (3.10)--(3.13), whose, for instance, the first is:
\begin{equation}
_{2}\mathrm{F}_{1}\left( \left. 
\begin{array}{c}
\frac{1}{2};\frac{3}{4} \\[2mm]
\frac{3}{2}
\end{array}
\right| 2\right) =\frac{1}{2}\left( 1-i\right) \boldsymbol{K}\left( \frac{1}{
\sqrt{2}}\right).   \label{enu5:1}
\end{equation}
The above example explains the spirit of the present paper where, similarly, we obtain new formulae involving multivariate hypergeometric functions (Appell ${\rm F}_1$ and Lauricella ${\rm F}_D^{(n)}$) using two sequences of definite integrals
\begin{equation*}
A_n(a,b)=\int_0^1\frac{t^{a-1}}{(1-t^n)^b}{\rm d}t,\quad B_n(a,b)=\int_0^\infty\frac{t^{a-1}}{(1+t^n)^b}{\rm d}t. 
\end{equation*}
In this way we succeed in generalizing the Kummer identity, see \cite{kummer}, \cite{bailey} pp. 9-10, or Corollary~3.1.2  p. 126 in \cite{specfaar}:
\begin{equation} \label{ikummer}
_{2}\mathrm{F}_{1}\left( \left. 
\begin{array}{c}
a;b \\[2mm]
1+a-b
\end{array}
\right| -1\right)=\frac{\Gamma(1+a-b)\;\Gamma(1+\frac{a}{2})}
{\Gamma(1+a)\;\Gamma(1+\frac{a}{2}-b)}
\end{equation}
concerning the Gauss function, to such multivariate
hypergeometric functions as those of Appell and Lauricella.

In the second part of this paper, using some reduction schemes of hyperelliptic integrals due to Goursat \cite{goursat} and Hermite \cite{herovres, her}, we evaluate Appell and Lauricella ${\rm F}_D^{(n)}$ functions in some particular occurrences and in their analytic continuation, as in \cite{jnt1} and \cite{jnt2} where we obtained several identities connecting $\pi$, elliptic integrals and hypergeometric functions through reductions of hyperelliptic
integrals due to Jacobi \cite{cre} and to Hermite \cite{her}.

Finally, we take into account reductions of hyperelliptic integrals of increasing genus.
The relevant transformations have been introduced in past years by Belokolos et al. \cite{Belokolos1986}, Eilbeck and Enol'skii \cite{eilbeck1994elliptic}, Enol'skii and Kostov \cite{enol1994geometry} and produced in a systematic and algorithmic way by Maier \cite{maier2008}. 
Besides remarkable evaluations of functions of Appell, see for instance equation \eqref{hermyF1}, and  Lauricella, theorem \ref{genus4}, we go on with our path, see \cite{jnt1} and \cite{jnt2}, of using the reductions for gaining a double evaluation of the same definite integral. The hyperelliptic integrals are computed through the integral representation theorems, \eqref{irto}, while the identities are obtained by comparing the above evaluation to the relevant elliptic integral gained by the reduction.  Our integrals are necessary definite: therefore transformations of higher degree and/or integrals of higher genus would lead to algebraic equations of progressively higher degree, lacking an explicit solution in terms of radicals, and thus to an intractable mass of computations.

\section{Notations}
In this paper whenever $_{2}{\rm F}_1$ denotes the well-known
Gauss hypergeometric series: 
\begin{equation} \label{iseries}
_{2}\mathrm{F}_{1}\left( \left. 
\begin{array}{c}
a;b \\[2mm]
c
\end{array}
\right| x\right) =\sum_{m=0}^{\infty }\frac{\left( a\right) _{m}\left(
b\right) _{m}}{\left( c\right) _{m}}\frac{x^{m}}{m!},
\end{equation}
it also denotes its analytic continuation on
$\mathbb{C}\setminus [1,\infty)$ via its integral representation theorem
$
\re{a}>0,\,\re{c-a}>0,\,|x|<1$: 
\begin{equation}\label{irt2f1}
_{2}\mathrm{F}_{1}\left( \left. 
\begin{array}{c}
a;b \\[2mm]
c
\end{array}
\right| x\right) =\frac{\Gamma (c)}{\Gamma (c-a)\Gamma (a)}\int_{0}^{1}%
\frac{u^{a-1}(1-u)^{c-a-1}}{(1-x\,u)^{b}}\,\mathrm{d}u\,.
\end{equation}
In \eqref{iseries} $(a)_{m}$ is the usual Pochhammer symbol: 
\[
(a)_{m}=\frac{\Gamma (a+m)}{\Gamma (a)}=a(a+1)\cdots (a+m-1).
\]
We also consider the following multivariate extensions of $_{2}{\rm F}_1$:  the Appell ${\rm F}_1$ two-variable hypergeometric series, see \cite{ap}, defined for $%
|x_{1}|<1,\,|x_{2}|<1$: 
\[
\mathrm{F}_{1}\left( \left. 
\begin{array}{c}
a;b_{1},b_{2} \\[2mm]
c
\end{array}
\right| x_{1},x_{2}\right) =\sum_{m_{1}=0}^{\infty }\sum_{m_{2}=0}^{\infty }%
\frac{(a)_{m_{1}+m_{2}}(b_{1})_{m_{1}}(b_{2})_{m_{2}}}{(c)_{m_{1}+m_{2}}}%
\frac{x_{1}^{m_{1}}}{m_{1}!}\frac{x_{2}^{m_{2}}}{m_{2}!}.
\]
The analytic continuation of  Appell's function on
$\mathbb{C}\setminus [1,\infty)\times\mathbb{C}\setminus [1,\infty)$ comes from its integral representation theorem: if $\re{a}>0,\,\re{c-a}>0$: 
\begin{equation}
\mathrm{F}_{1}\left( \left. 
\begin{array}{c}
a;b_{1},b_{2} \\[2mm]
c
\end{array}
\right| x_{1},x_{2}\right) =\frac{\Gamma (c)}{\Gamma (a)\Gamma (c-a)}%
\int_{0}^{1}\frac{u^{a-1}\left( 1-u\right) ^{c-a-1}}{\left(
1-x_{1}\,u\right) ^{b_{1}}\left( 1-x_{2}\,u\right) ^{b_{2}}}\,\mathrm{d}u.
\label{F1}
\end{equation}
Its $n$-variable extension leads to Lauricella hypergeometric series, introduced in \cite{Lau}:
\[
\mathrm{F}_{D}^{(n)}\left( \left. 
\begin{array}{c}
a;b_{1},\ldots ,b_{n} \\[2mm]
c
\end{array}
\right| x_{1},\ldots ,x_{n}\right) =\sum_{m_{1}=0}^{\infty }\cdots
\sum_{m_{n}=0}^{\infty }\frac{(a)_{m_{1}+\cdots +m_{n}}(b_{1})_{m_{1}}\cdots
(b_{n})_{m_{n}}}{(c)_{m_{1}+\cdots +m_{n}}m_{1}!\cdots m_{n}!}%
\,x_{1}^{m_{1}}\cdots x_{n}^{m_{m}}
\]
whose integral representation theorem for $\re{a}>0,\,\re{c-a}>0$ allows its analytic continuation on the $n$-fold cartesian product of $\mathbb{C}\setminus [1,\infty)^n$
\begin{equation}\label{irto}
\mathrm{F}_{D}^{(n)}\left( \left. 
\begin{array}{c}
a;b_{1},\ldots ,b_{n} \\[2mm]
c
\end{array}
\right| x_{1},\ldots ,x_{n}\right) =\frac{\Gamma (c)}{\Gamma (a)\,\Gamma
(c-a)}\,\int_{0}^{1}\frac{u^{a-1}(1-u)^{c-a-1}}{(1-x_{1}u)^{b_{1}}\cdots
(1-x_{n}u)^{b_{n}}}\,\mathrm{d}u. 
\end{equation}
We will also use the order reduction formula for Lauricella functions
\begin{equation} \label{id:3}
\mathrm{F}_{D}^{(n)}\left( \left. 
\begin{array}{c}
a;b_{1},\ldots ,b_{n} \\[2mm]
b_{1}+\cdots +b_{n}
\end{array}
\right| x_{1},\ldots ,x_{n}\right) =\frac{1}{(1-x_{n})^{a}}\,\mathrm{F}%
_{D}^{(n-1)}\left( \left. 
\begin{array}{c}
a,b_{1},\ldots ,b_{n-1} \\ 
b_{1}+\cdots +b_{n}
\end{array}
\right| \frac{x_{1}-x_{n}}{1-x_{n}},\ldots ,\frac{x_{n-1}-x_{n}}{1-x_{n}}%
\right)  
\end{equation}
whose proof was given in \cite{jnt2}, lemma 1.1 therein.

\section{Special  hypergeometric values from Eulerian integrals}

\subsection{Values at the boundary of unit disk}
For the purpose, let us introduce a generalization of the Kummer identity to both Appell and Lauricella functions. Our starting point is an elementary proof of \eqref{ikummer} we discovered, at least we presume, through a two-fold evaluation of the definite integral:
\begin{equation}\label{integrale}
A_2(a,b)=\int_0^1\frac{t^{a-1}}{(1-t^2)^b}{\rm d}t
\end{equation}
where, for the sake of convergence of \eqref{integrale}, we assume $\re{a}>0$ and $\re{b}<1.$ 
First of all, we change variable in \eqref{integrale}, putting $t^2=u;$ it follows that
\[
A_2(a,b)=\frac12\frac{\Gamma\left(\frac{a}{2}\right)\Gamma(1-b)}{\Gamma\left(1+\frac{a}{2}-b\right)}.
\]
On the other hand, writing \eqref{integrale} as
\begin{equation}\label{integrale2}
A_2(a,b)=\int_0^1\frac{t^{a-1}(1-t)^{-b}}{(1+t)^b}{\rm d}t,
\end{equation}
from the \eqref{irt2f1} integral representation theorem, we get:
\begin{equation}\label{A2}
A_2(a,b)=\frac{\Gamma(a)\Gamma(1-b)}{\Gamma(1-a-b)}\,_{2}\mathrm{F}_{1}\left( \left. 
\begin{array}{c}
a;b \\[2mm]
1+a-b
\end{array}
\right| -1\right).
\end{equation}
By comparing the expressions for $A_2(a,b)$ and recalling that
\[
\frac{\Gamma\left(\frac{a}{2}\right)}{2\Gamma(a)}=\frac{\Gamma\left(1+\frac{a}{2}\right)}{\Gamma(1+a)}
\]
we infer \eqref{ikummer} which was originally found by Kummer in \cite{kummer}. 
Notice that our method of the double evaluation allows avoiding a quadratic transformation in order to prove the Kummer identity, see the proof of \eqref{ikummer} given in \cite{specfaar} corollary 3.1.2 p. 136.

To obtain for the Appell ${\rm F}_1$ function something analogous to formula \eqref{ikummer}, we consider the integral, where we change the exponent 2 in \eqref{integrale} to 3:
\begin{equation}\label{integrale3}
A_3(a,b)=\int_0^1\frac{t^{a-1}}{(1-t^3)^b}{\rm d}t.
\end{equation}
Putting $t^3=u$ we evaluate \eqref{integrale3} through the Eulerian integral of the first kind as
\[
A_3(a,b)=\frac13\frac{\Gamma\left(\frac{a}{3}\right)\Gamma(1-b)}{\Gamma\left(1+\frac{a}{3}-b\right)}.
\]
On the other hand we can factorize $1-t^3$ obtaining
\[
A_3(a,b)=\int_0^1\frac{t^{a-1}(1-t)^{-b}}{(1-e^{\frac{2}{3}\pi i}t)^b(1-e^{-\frac{2}{3}\pi i}t)^b}{\rm d}t=\frac{\Gamma(a)\Gamma(1-b)}{\Gamma(1+a-b)}\,{\rm F}_1\left( \left. 
\begin{array}{c}
a;b,b \\[2mm]
1 + a - b
\end{array}
\right| e^{\frac{2}{3}\pi i},e^{-\frac{2}{3}\pi i}\right).
\]
Using the elementary identity, $n\in\mathbb{N}:$
\[
\frac{\Gamma\left(\frac{a}{n}\right)}{n\Gamma(a)}=\frac{\Gamma\left(1+\frac{a}{n}\right)}{\Gamma(1+a)},
\] 
taking $n=3$ and comparing the expression for $A_3(a,b)$ we get
\begin{equation}\label{effe1}
{\rm F}_1\left( \left. 
\begin{array}{c}
a;b,b \\[2mm]
1 + a - b
\end{array}
\right| e^{\frac{2}{3}\pi i},e^{-\frac{2}{3}\pi i}\right)=\frac{\Gamma(1+a-b)\;\Gamma(1+\frac{a}{3})}
{\Gamma(1+a)\;\Gamma(1+\frac{a}{3}-b)}.
\end{equation}
Equation \eqref{effe1} requires comments. It is a two-variable generalization of the Kummer identity \eqref{ikummer}: can it also be obtained through  some variable transformation for instance of quadratic or cubic kind? Such transformations after Appell and Kamp{\'e} de F{\'e}riet  \cite{ap}, were pursued by Erd\'elyi in \cite{mr00261160}, a paper concerning only the function ${\rm F}_2$; and, more recently by Carlson \cite{carlson1976quadratic} and Matsumoto \cite{matsumoto2010transformation} (quadratic transformations) and by Matsumoto and Ohara \cite{matsumoto2009some}, who analyze a cubic transformation. More particularly the transformations studied in  \cite{matsumoto2010transformation} and in \cite{matsumoto2009some},  depending upon only one free parameter, have the parameters meeting the ratio of formula \eqref{effe1}, namely:
\[
\begin{split}
&(z_1z_2)^{(1-c)/2}\left(\frac{z_1+z_2}{2}\right)^c\,\mathrm{F}_{1}\left( \left.
\begin{array}{c}
\frac{3+c}{4},\frac{1+c}{4};\frac{1+c}{6} \\[2mm] 
\frac{3+3c}{4}
\end{array}
\right| 1-z_1^2,1-z_2^2\right)\\
&=\mathrm{F}_{1}\left(\left. 
\begin{array}{c}
c;\frac{1+c}{4};\frac{1+c}{4} \\[2mm] 
\frac{c+5}{6}
\end{array}
\right| 1-\frac{z_1(1+z_2)}{z_1+z_2},1-\frac{z_2(1+z_1)}{z_1+z_2}\right)
\end{split}
\]
and
\[
\left(\frac{1+z_1+z_2}{3}\right)^c\,\mathrm{F}_{1}\left( \left.
\begin{array}{c}
\frac{c}{3},\frac{c+1}{6};\frac{c+1}{6} \\[2mm] 
\frac{c+1}{2}
\end{array}
\right| 1-z_1^3,1-z_2	^3\right)=\mathrm{F}_{1}\left( \left.
\begin{array}{c}
\frac{c}{3};\frac{c+1}{6};\frac{c+1}{6} \\[2mm] 
\frac{c+5}{6}
\end{array}
\right| z'_1,z'_2\right)
\]
where $\omega=\dfrac{-1+i\sqrt3}{2}$ and $(z_1,z_2)$ is linked to $(z'_1,z'_2)$ by:
\[
(z_1,z_2)\mapsto(z'_1,z'_2)=\left(\left(\frac{1+\omega z_1+\omega^2z_2}{1+z_1+z_2}\right)^3,\left(\frac{1+\omega^2z_1+\omega z_2}{1+z_1+z_2}\right)^3\right).
\]
However, we do not think that our identity  \eqref{effe1} can be obtained from these. The same is true if we consider the quadratic transformation given by equation (4.1) in Carlson paper \cite{carlson1976quadratic}, i.e.
\[
\mathrm{F}_{1}\left( \left.
\begin{array}{c}
2a;b,b \\[2mm] 
a+b+\frac12
\end{array}
\right| \sin^2\theta,\sin^2\varphi\right)
=\mathrm{F}_{1}\left( \left.
\begin{array}{c}
b;a,a \\[2mm] 
a+b+\frac12
\end{array}
\right| \sin^2(\theta+\varphi),\sin^2(\theta-\varphi)\right)
\]
Of course the settlement of a change of variable for function ${\rm F}_1$ capable of yielding \eqref{effe1}, is a possible subject of a further research. 

We terminate our discussion about \eqref{effe1} noting that, by applying the first degree \lq\lq Pfaff-like'' transformation for the Appell ${\rm F}_1$ function:
\[
\mathrm{F}_{1}\left( \left.
\begin{array}{c}
a;b_1,b_2 \\[2mm] 
c
\end{array}
\right| x_1,x_2\right)=(1-x_1)^{-b_{1}}(1-x_2)^{-b_{2}}\mathrm{F}_{1}\left( \left.
\begin{array}{c}
c-a;b_1,b_2 \\[2mm] 
c
\end{array}
\right| \frac{x_1}{x_1-1},\frac{x_2}{x_2-1}\right)
\]
where we have the limitation ${\rm Re\, }c>{\rm Re\,}a>0,\,|x_k|<1,\,|x_k/(x_k-1)|<1$, we obtain, in elementary way, more identities since using the Pfaff transformation we can rewrite \eqref{effe1} as
\begin{equation}\tag{\ref{effe1}b}
{\rm F}_1\left( \left. 
\begin{array}{c}
1-b;b,b \\[2mm]
1 + a - b
\end{array}
\right|\frac12\left(1-\frac{i}{\sqrt3}\right),\frac12\left(1+\frac{i}{\sqrt3}\right)\right)=3^{b}\,\frac{\Gamma(1+a-b)\;\Gamma(1+\frac{a}{3})}
{\Gamma(1+a)\;\Gamma(1+\frac{a}{3}-b)}.
\end{equation}

Going on, from the integral
\begin{equation}\label{integrale4}
\int_0^1\frac{t^{a-1}}{(1-t^4)^b}{\rm d}t=\frac{\Gamma \left(\frac{a}{4}\right) \Gamma (1-b)}{4 \Gamma
   \left(1+\frac{a}{4}-b\right)}
\end{equation}
factorizing $1-t^4$ we get the Kummer-like formula for the Lauricella ${\rm F}_D^{(3)}$ and the companion formula obtained after a \lq\lq Pfaff-like'' degree-1 transformation of the Lauricella ${\rm F}_D^{(3)}$:
\begin{equation}\label{fd3}
\mathrm{F}_{D}^{(3)}\left( \left. 
\begin{array}{c}
a;b,b,b \\[2mm]
1+a-b
\end{array}
\right| -1,i,-i\right)=\frac{\Gamma (a-b+1)\Gamma \left(1+\frac{a}{4}\right) }{\Gamma (1+a)
   \Gamma \left(1+\frac{a}{4}-b\right)}
\end{equation}
\begin{equation}\tag{\ref{fd3}b}
\mathrm{F}_{D}^{(3)}\left( \left. 
\begin{array}{c}
1-b;b,b,b \\[2mm]
1+a-b
\end{array}
\right| \frac12,\frac12(1-i),\frac12(1+i)\right)=4^b\,\frac{\Gamma (a-b+1)\Gamma \left(1+\frac{a}{4}\right) }{\Gamma (1+a)
   \Gamma \left(1+\frac{a}{4}-b\right)}
\end{equation}
We see that the general relation for $n\in\mathbb{N},\,n\geq2$ is
\begin{equation}\label{rela0}
A_n(a,b)=\frac{\Gamma(\frac{a}{n})\Gamma(1-b)}{n\Gamma(1+\frac{a}{n}+b)}.
\end{equation}
Thus by \eqref{rela0} we can state our result in general. In order to simplify the notation whenever in a Lauricella function the $n$ parameters of type $b$ are all equal, $b_{1}=\cdots=b_{n}=b$, we put
\[
\mathrm{F}_{D}^{(n)}\left( \left. 
\begin{array}{c}
a;b_{1},\ldots ,b_{n} \\[2mm]
c
\end{array}
\right| x_{1},\ldots ,x_{n}\right) =\mathrm{F}_{D}^{(n)}\left( \left. 
\begin{array}{c}
a;b \\[2mm]
c
\end{array}
\right| x_{1},\ldots ,x_{n}\right)
\]
the number of repetitions of $b$ is denoted by the apex of the Lauricella function. The theorem generalizing \eqref {fd3} is then the following:

\begin{theorem}
Let $n$ be a positive integer $\geq2$. For $k=1,\ldots,n-1$ let $\omega_{k,n}=e^{i\frac{2k\pi}{n}}$ root of unity so that $\omega_{k,n}^n=1$ and $\omega_{k,n}\neq1$ for $k=1,\dots,n-1.$ Let $0<\re{b}<1$ and $\re{a}>0$; then
\begin{equation}\label{fdn}
\mathrm{F}_{D}^{(n-1)}\left( \left. 
\begin{array}{c}
a,b \\[2mm]
1+a-b
\end{array}
\right|\omega_{1,n},\ldots ,\omega_{n-1,n}\right) =\frac{\Gamma (a-b+1)\Gamma \left(1+\frac{a}{n}\right) }{\Gamma (1+a)
   \Gamma \left(1+\frac{a}{n}-b\right)}
\end{equation}
\begin{equation}\tag{\ref{fdn}b}
\mathrm{F}_{D}^{(n-1)}\left( \left. 
\begin{array}{c}
1-b,b \\[2mm]
1+a-b
\end{array}
\right|\frac{\omega_{1,n}}{\omega_{1,n}-1},\ldots ,\frac{\omega_{n-1,n}}{\omega_{n-1,n}-1}\right) =n^{b}\frac{\Gamma (a-b+1)\Gamma \left(1+\frac{a}{n}\right) }{\Gamma (1+a)
   \Gamma \left(1+\frac{a}{n}-b\right)}
\end{equation}
\end{theorem}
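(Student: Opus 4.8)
The plan is to apply the \emph{double evaluation} method, exactly as carried out above for $n=2,3,4$, to the integral
\[
A_n(a,b)=\int_0^1\frac{t^{a-1}}{(1-t^n)^b}\,\mathrm{d}t ,
\]
which converges under the stated hypotheses $\re{a}>0$ and $0<\re{b}<1$. The first evaluation is the elementary one recorded in \eqref{rela0}: the substitution $t^n=u$ turns $A_n(a,b)$ into $\tfrac1n\int_0^1 u^{a/n-1}(1-u)^{-b}\,\mathrm{d}u$, which by the Eulerian integral of the first kind equals $\dfrac{\Gamma(a/n)\,\Gamma(1-b)}{n\,\Gamma(1+a/n-b)}$. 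For the second evaluation I would use the full cyclotomic factorization $1-t^n=\prod_{k=0}^{n-1}(1-\omega_{k,n}t)$, with $\omega_{0,n}=1$, to rewrite
\[
A_n(a,b)=\int_0^1\frac{t^{a-1}(1-t)^{-b}}{\displaystyle\prod_{k=1}^{n-1}(1-\omega_{k,n}t)^{b}}\,\mathrm{d}t ,
\]
which is precisely the integral representation \eqref{irto} of $\mathrm F_D^{(n-1)}$ with first parameter $a$, lower parameter $c=1+a-b$ (so that the exponent of $1-u$ is $c-a-1=-b$), all the $b_k$ equal to $b$, and arguments $x_k=\omega_{k,n}$; hence $A_n(a,b)$ equals $\dfrac{\Gamma(a)\,\Gamma(1-b)}{\Gamma(1+a-b)}$ times the left-hand side of \eqref{fdn}.

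Equating the two expressions for $A_n(a,b)$ and cancelling the common factor $\Gamma(1-b)$ shows that the left-hand side of \eqref{fdn} equals $\dfrac{\Gamma(1+a-b)\,\Gamma(a/n)}{n\,\Gamma(a)\,\Gamma(1+a/n-b)}$; the elementary identity $\dfrac{\Gamma(a/n)}{n\,\Gamma(a)}=\dfrac{\Gamma(1+a/n)}{\Gamma(1+a)}$ (already used above for $n=2,3$) then converts this into $\dfrac{\Gamma(a-b+1)\,\Gamma(1+\tfrac an)}{\Gamma(1+a)\,\Gamma(1+\tfrac an-b)}$, which is \eqref{fdn}. To obtain the companion formula I would apply to \eqref{fdn} the degree-one \lq\lq Pfaff-like'' transformation for $\mathrm F_D^{(n-1)}$, that is, the $(n-1)$-variable Lauricella analogue of the transformation for Appell's $\mathrm F_1$ recalled above (the one used there to pass from \eqref{fd3} to its companion), with $c-a=1-b$ and all $b_k=b$: it sends each argument $\omega_{k,n}$ to $\omega_{k,n}/(\omega_{k,n}-1)$, replaces the top parameter by $c-a=1-b$, and produces the prefactor $\prod_{k=1}^{n-1}(1-\omega_{k,n})^{-b}$. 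Since $\prod_{k=1}^{n-1}(x-\omega_{k,n})=1+x+\cdots+x^{n-1}$, evaluation at $x=1$ gives $\prod_{k=1}^{n-1}(1-\omega_{k,n})=n$, so the prefactor equals $n^{-b}$; solving for the transformed Lauricella function yields the desired formula \eqref{fdn}b.

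I do not expect a genuine obstacle here, only two analytic points to be dispatched with some care. First, the $\omega_{k,n}$ sit on the boundary of the polydisk of convergence, so \eqref{irto} must be read as the \emph{analytic continuation} of $\mathrm F_D^{(n-1)}$; this is legitimate because $\omega_{k,n}\notin[1,\infty)$ for $k=1,\dots,n-1$ and the integral above converges --- $\re{a}>0$ controls the endpoint $t\to0^+$, $\re{b}<1$ together with $1-\omega_{k,n}\neq0$ controls $t\to1^-$, and no factor $1-\omega_{k,n}t$ vanishes on $(0,1)$ since $|1/\omega_{k,n}|=1$. Second, one must verify that the positive real branch of $(1-t^n)^{-b}$ appearing in $A_n(a,b)$ coincides with the product of the \emph{principal} branches of the factors $(1-\omega_{k,n}t)^{-b}$: grouping $\omega_{k,n}$ with $\omega_{n-k,n}=\overline{\omega_{k,n}}$ one has $(1-\omega_{k,n}t)(1-\overline{\omega_{k,n}}t)=|1-\omega_{k,n}t|^2>0$ on $[0,1)$ with opposite, cancelling arguments, so each conjugate-pair product is a positive real, the total product is $1-t^n>0$, and no spurious winding occurs (for even $n$ the real root $\omega_{n/2,n}=-1$ contributes the positive factor $1+t$ and is handled the same way). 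The very same bookkeeping is already implicit in the cases $n=3$ and $n=4$ treated in \eqref{effe1} and \eqref{fd3}, so once it is written out the remaining computation is entirely routine.
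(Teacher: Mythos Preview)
Your proposal is correct and follows essentially the same route as the paper: evaluate $A_n(a,b)$ once via the substitution $t^n=u$ (giving \eqref{rela0}) and once via the cyclotomic factorization and the integral representation \eqref{irto} (giving \eqref{rela1}), compare, and then apply the degree-one Pfaff-like transformation together with $\prod_{k=1}^{n-1}(1-\omega_{k,n})=n$ to obtain the companion formula. Your added remarks on analytic continuation and branch consistency go slightly beyond what the paper spells out, but the underlying argument is the same.
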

\begin{proof}
From the integral representation theorem, \eqref{irto} we can establish relation \eqref{rela1} below. It provides integrals $A_n(a,b)$ in terms of $(n-1)$ multivariate Lauricella function $\mathrm{F}_{D}^{(n-1)}$:
\begin{equation}\label{rela1}
A_n(a,b)=\frac{\Gamma(a)\Gamma(1-b)}{\Gamma(1+a-b)}\mathrm{F}_{D}^{(n-1)}\left( \left. 
\begin{array}{c}
a,b \\[2mm]
1+a-b
\end{array}
\right|\omega_{1,n},\ldots ,\omega_{n-1,n}\right)
\end{equation}
Thesis \eqref{fdn} follows by comparing \eqref{rela1} with \eqref{rela0} and (\ref{fdn}b) follows from the Pfaff first degree transformation of the Lauricella function $\mathrm{F}_{D}^{(n-1)},$ recalling that for $n\in\mathbb{N}$
\[
\prod_{k=1}^{n-1}\left(1-\omega_{k,n}\right)=\prod_{k=1}^{n-1}\left(1-\cos\frac{2k\pi}{n}+i\,\sin\frac{2k\pi}{n}\right)=n.
\]
\end{proof}
\begin{remark}
In the case $n=2$ when $a=1$ and $b=1/2$ we have the elementary value $A_2(1,1/2)=\pi/2$ which leads to
\[
_2{\rm F}_1\left( \left. 
\begin{array}{c}
1,1/2 \\[2mm]
3/2
\end{array}
\right|-1\right)=\frac{\pi}{4}
\]
according to equation (14) section 2.8 p. 102 of \cite{bateman}:
\[
\frac{\arctan z}{z}=\, _2{\rm F}_1\left( \left. 
\begin{array}{c}
1,1/2 \\[2mm]
3/2
\end{array}
\right|-z^2\right).
\]
\end{remark}
\subsection{Values outside the unit disk}
Consider the sequence of integrals
\begin{equation}\label{grad}
B_n(a,b)=\int_0^\infty\frac{t^{a-1}}{(1+t^n)^b}{\rm d}t
\end{equation}
where we assume $a>0,\,b>0,\,na>b$ to ensure convergence. The integral  \eqref{grad} can be computed by means of the Mellin transform in \cite{bat} p. 310 equation 21, see also \cite{gra} entry 3.194-3 p. 313, so that
\[
B_n(a,b)=\frac{\Gamma(\frac{a}{n})\Gamma(\frac{nb-a}{n})}{n\Gamma(b)}.
\]
On the other hand, we can also evaluate $B_n(a,b)$ using the integral representation theorem, \eqref{irto}. In fact the change of variable $t=(1-u)/u$ leads to
\begin{equation}\label{Jn}
B_n(a,b)=\int_0^1\frac{u^{nb-a-1}(1-u)^{a-1}}{\left(u^n+(1-u)^n\right)^b}{\rm d}u.
\end{equation}
Notice that for $n$ even the polynomial $u^n+(1-u)^n$ has degree $n$, while for $n$ odd has degree $n-1$. In any case the roots are distinct and the factorization holds:
\begin{equation*}
u^n+(1-u)^n=\prod_k\left(1-\frac{1}{\alpha_k}u\right)
\end{equation*}
where $\alpha_k$ is a root of $u^n+(1-u)^n=0$. These roots are evaluated, through the De Moivre formula, as follows. If $n$ is even, say $n=2m$, we have
\begin{equation}\label{evenroot}
\alpha_k=\frac12-i\frac{\sin\frac{(2k-1)\pi}{2m}}{2\left(1+\cos\frac{(2k-1)\pi}{2m}\right)},\quad k=1,\dots,2m.
\end{equation}
If $n$ is odd, say $n=2m-1$
\begin{equation}\label{oddroot}
\alpha_k=\frac12-i\frac{\sin\frac{(2k-1)\pi}{2m-1}}{2\left(1+\cos\frac{(2k-1)\pi}{2m-1}\right)},\quad k=1,\dots,2m-1,\,k\neq m.
\end{equation}

Now let us state our second result. 

\begin{theorem}
Assume that $a>0,\,b>0,\,nb>a.$ If $n=2m$ is even, then:
\begin{equation}\label{even}
\mathrm{F}_{D}^{(2m)}\left( \left. 
\begin{array}{c}
2mb-a;b \\[2mm]
2mb
\end{array}
\right| x_1,\dots,x_{2m}\right)=\frac{1}{2m}\,\frac{\Gamma(\frac{a}{2m})\Gamma(2mb)\Gamma(\frac{2mb-a}{2m})}{\Gamma(a)\Gamma(b)\Gamma(2mb-a)}
\end{equation}
where, in \eqref{even} for $k=1,\dots,2m$:
\begin{equation}\label{reci}
x_k=1+\cos\frac{(2k-1)\pi}{2m}+i\sin\frac{(2k-1)\pi}{2m}.
\end{equation}
If $n=2m-1$ is odd then
\begin{equation}\label{odd}
\mathrm{F}_{D}^{(2m-2)}\left( \left. 
\begin{array}{c}
(2m-1)b-a;b \\[2mm]
(2m-1)b
\end{array}
\right| y_1,\dots,y_{2m-1}\right)=\frac{1}{2m-1}\,\frac{\Gamma(\frac{a}{2m-1})\Gamma((2m-1)b)\Gamma(\frac{(2m-1)b-a}{2m-1})}{\Gamma(a)\Gamma(b)\Gamma((2m-1)b-a)}
\end{equation}
where in \eqref{odd} for $k=1,\dots,2m-1,\,k\neq m$ 
\[
y_k=1+\cos\frac{(2k-1)\pi}{2m-1}+i\sin\frac{(2k-1)\pi}{2m-1}.
\]
\end{theorem}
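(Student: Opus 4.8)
The plan is to evaluate the integral $B_n(a,b)$ of \eqref{grad} in two independent ways and to equate the outcomes, in exact analogy with the treatment of $A_n(a,b)$ in the preceding subsection. The first evaluation is already recorded: the tabulated Mellin transform gives
\[
B_n(a,b)=\frac{\Gamma\!\left(\tfrac{a}{n}\right)\Gamma\!\left(\tfrac{nb-a}{n}\right)}{n\,\Gamma(b)},
\]
valid under the standing hypotheses $a>0$, $b>0$, $nb>a$, which are precisely those securing convergence at both endpoints. For the second evaluation I would start from \eqref{Jn}, obtained by the substitution $t=(1-u)/u$, and bring its integrand into Lauricella form.

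The key structural remark is that $P(u):=u^{n}+(1-u)^{n}$ is a polynomial with $P(0)=1$ and with the simple roots $\alpha_k$ listed in \eqref{evenroot} (for $n=2m$) and \eqref{oddroot} (for $n=2m-1$); hence $P(u)=\prod_k\bigl(1-u/\alpha_k\bigr)$, the product running over all the roots and needing no extra constant factor exactly because $P(0)=1$. The number of factors is $n$ when $n$ is even and $n-1$ when $n$ is odd, reflecting the drop of degree caused by the cancellation of the top-degree terms in the odd case. Inserting this factorisation into \eqref{Jn} gives
\[
B_n(a,b)=\int_0^1 u^{\,nb-a-1}(1-u)^{\,a-1}\prod_k\Bigl(1-\tfrac{1}{\alpha_k}u\Bigr)^{-b}\,\mathrm{d}u,
\]
which is the integrand of the Lauricella representation \eqref{irto} with $a\mapsto nb-a$, $c\mapsto nb$, every $b_j$ equal to $b$, and $x_k=1/\alpha_k$. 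Therefore
\[
B_n(a,b)=\frac{\Gamma(nb-a)\,\Gamma(a)}{\Gamma(nb)}\,\mathrm{F}_{D}^{(N)}\!\left(\left.\begin{array}{c}nb-a;b\\ nb\end{array}\right|\tfrac{1}{\alpha_1},\dots,\tfrac{1}{\alpha_N}\right),\qquad N=\begin{cases}n,& n\ \text{even},\\[1mm] n-1,& n\ \text{odd}.\end{cases}
\]

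It then remains to recognise the arguments $1/\alpha_k$ and to compare. Writing $\theta_k=(2k-1)\pi/n$ and using $\sin\theta/(1+\cos\theta)=\tan(\theta/2)$, the formulae \eqref{evenroot}--\eqref{oddroot} become $\alpha_k=\tfrac12\bigl(1-i\tan(\theta_k/2)\bigr)=e^{-i\theta_k/2}/\bigl(2\cos(\theta_k/2)\bigr)$, so that $1/\alpha_k=2\cos(\theta_k/2)\,e^{i\theta_k/2}=1+e^{i\theta_k}=1+\cos\theta_k+i\sin\theta_k$, which is precisely the point $x_k$ of \eqref{reci} (respectively $y_k$ in the odd case). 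Equating the two expressions for $B_n(a,b)$ and solving for $\mathrm{F}_{D}^{(N)}$ then yields, after a routine rearrangement of the Gamma factors, identity \eqref{even} when $n=2m$ and identity \eqref{odd} when $n=2m-1$.

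The step I expect to need the most care is the analytic-continuation bookkeeping rather than any computation: one has $|x_k|=|1+e^{i\theta_k}|=2|\cos(\theta_k/2)|$, which for the small values of $k$ exceeds $1$, so the $\mathrm{F}_{D}^{(N)}$ appearing in \eqref{even}/\eqref{odd} is \emph{not} the convergent multiple series but the analytic continuation provided by \eqref{irto}; to invoke \eqref{irto} one must check that each $x_k$ avoids the cut $[1,\infty)$. This is exactly why the parity split and the exclusion $k\neq m$ in the odd case occur: $x_k$ is real precisely when $\sin\theta_k=0$, i.e. when $(2k-1)/n\in\mathbb{Z}$, which is impossible for $n$ even and forces $k=m$ for $n$ odd --- and $k=m$ is just the index at which a root of $P$ has disappeared, so it is rightly dropped. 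Granting this, the argument is the literal counterpart of the $A_n(a,b)$ computation, with the Eulerian Beta evaluation there replaced by the Mellin-transform evaluation of $B_n(a,b)$ here.
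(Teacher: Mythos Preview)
Your proposal is correct and follows essentially the same route as the paper: compute $B_n(a,b)$ once via the Mellin-transform formula and once by factoring $u^n+(1-u)^n$ in \eqref{Jn} and invoking the Lauricella integral representation \eqref{irto}, then compare. You supply more detail than the paper does---in particular the explicit verification that $1/\alpha_k=1+e^{i\theta_k}$ and the check that the arguments avoid the cut $[1,\infty)$---but the underlying argument is identical.
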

\begin{remark}
In the even case $n=2m$, it is possible, using Lemma 1.1 of \cite{jnt2}, to reduce the order of the Lauricella function appearing in \eqref{even}, obtaining:
\[
\left(-e^{\frac{i \pi }{2 m}}\right)^{2 b m-a}\mathrm{F}_{D}^{(2m-1)}\left( \left. 
\begin{array}{c}
2mb-a;b \\[2mm]
2mb
\end{array}
\right| z_1,\dots,z_{2m-1}\right)=\frac{1}{2m}\,\frac{\Gamma(\frac{a}{2m})\Gamma(2mb)\Gamma(\frac{2mb-a}{2m})}{\Gamma(a)\Gamma(b)\Gamma(2mb-a)}
\]
with arguments given for $k=1,\dots,2m-1$ by
\[
z_k=1-\cos\frac{\pi k}{m}-i \sin\frac{\pi  k}{m}.
\]
Thus for $m=1$ we obtain, as a particular case, theorem 3.1 of \cite{jnt2}.
\end{remark}
\begin{proof}
Using the integral representation theorem, formula \eqref{irto} in equation \eqref{Jn} we get, if $n=2m$ is even
\begin{equation}\label{repeven}
B_n(a,b)=\frac{\Gamma(a)\Gamma(nb-a)}{\Gamma(nb)}\mathrm{F}_{D}^{(n)}\left( \left. 
\begin{array}{c}
nb-a;b \\[2mm]
nb
\end{array}
\right|x_1,\dots,x_n\right)
\end{equation}
where  we denote by $x_1,\dots,x_n$ the reciprocal of the $n$ roots of equation $u^n+(1-u)^n=0$ given by \eqref{reci}. While if $n=2m-1$ is odd we have
\begin{equation}\label{repodd}
B_n(a,b)=\frac{\Gamma(a)\Gamma(nb-a)}{\Gamma(nb)}\mathrm{F}_{D}^{(n-1)}\left( \left. 
\begin{array}{c}
nb-a;b \\[2mm]
nb
\end{array}
\right| x_1,\dots,x_n\right)
\end{equation}
but here the root $x_{\frac{n+1}{2}}=x_m$ is skipped. Theses \eqref{even} and \eqref{odd} follow by comparing \eqref{repeven} and \eqref{repodd} with  \eqref{grad}.
\end{proof}
\section{Some special cases}
Whenever in some special cases the Eulerian integrals $A_n(a,b)$ and $B_n(a,b)$ can be evaluated through complete elliptic integrals of the first and second kinds, we are able to establish, from our formulae \eqref{fdn} and \eqref{odd}, new relationships of hypergeometric functions' special values to certain complete elliptic integrals. These relationships arise for singular moduli of the elliptic integrals, according to the relevant theory studied by several authors, see \cite{SC, WW, BB, BZ}. We will use some integrals taken from \cite{gra} and some less known ones exploited in the past by eminent mathematicians of the XIX century, like Legendre \cite{tr-1} and \cite{tr-2}, Richelot \cite{R}, Serret \cite{S}. Such integrals, as far as we know, have been forgotten by recent literature.

Starting with the following elliptic integrals, notice that in any occurrence we also provide the integral under investigation through our families $A_n(a,b)$ or $B_n(a,b)$
\begin{align}\label{k1:2}
B_4(1,\tfrac12)&=\int_0^\infty\frac{{\rm d}x}{\sqrt{1+x^4}}=\boldsymbol{K}\left(\frac{1}{\sqrt2}\right)\\
B_3(\tfrac12,\tfrac12)&=\int_0^\infty\frac{{\rm d}x}{\sqrt{x(1+x^3)}}=\boldsymbol{K}\left(\frac{\sqrt6-\sqrt2}{4}\right)\label{k:r6r2}
\end{align}
By \eqref{repeven} and \eqref{repodd} we get
\begin{align}\label{k1:2rep}
&\mathrm{F}_{D}^{(4)}\left( \left. 
\begin{array}{c}
1;\frac12 \\[2mm]
2
\end{array}
\right| 1+\frac{1+i}{\sqrt{2}},1-\frac{1-i}{\sqrt{2}},1-\frac{1+i}{\sqrt{2}},1+\frac{1-i}{\sqrt{2}}\right)=\boldsymbol{K}\left(\frac{1}{\sqrt2}\right)\\
&\mathrm{F}_{1}\left( \left. 
\begin{array}{c}
1;\frac12,\frac12 \\[2mm]
3/2
\end{array}
\right| \frac32+i\frac{\sqrt3}{2},\frac32-i\frac{\sqrt3}{2}\right)=\frac{2}{\sqrt[4]{27}}\boldsymbol{K}\left(\frac{\sqrt6-\sqrt2}{4}\right)\label{k:r6r2rep}
\end{align}
By Lemma 1.1 of \cite{jnt2} in \eqref{k1:2rep} we can also establish values of ${\rm F}_{D}^{(3)}$ when one of the variables is in the positive real axis in analogy with our results for the Gauss $_2{\rm F}_1$ exposed in \cite{jnt2} theorem 3.2 equations (27) to (30) therein:
\[
\mathrm{F}_{D}^{(3)}\left( \left. 
\begin{array}{c}
1;\frac12 \\[2mm]
2
\end{array}
\right| 1 - i, 2, 1 + i\right)=-\frac{1-i}{\sqrt{2}}\boldsymbol{K}\left(\frac{1}{\sqrt2}\right)
\]

Now we turn our attention to some integrals from classical repertories \cite{gra} and \cite{by}. We begin with integral 3.183.2 p. 313 of \cite{gra}:
\[
A_2(1,\tfrac14)=\int_0^1\frac{{\rm d}x}{\sqrt[4]{1-x^2}}=\sqrt2\left[2\boldsymbol{E}\left(\frac{1}{\sqrt2}\right)-\boldsymbol{K}\left(\frac{1}{\sqrt2}\right)\right]
\]
so that, comparing with \eqref{A2}, we get the formula:
\begin{equation}\label{3.183.2}
_2\mathrm{F}_1\left( \left. 
\begin{array}{c}
1;\frac14 \\[2mm]
\frac74
\end{array}
\right|-1\right)=\frac34\sqrt2\left[2\boldsymbol{E}\left(\frac{1}{\sqrt2}\right)-\boldsymbol{K}\left(\frac{1}{\sqrt2}\right)\right]
\end{equation}
 where $ \boldsymbol{E}$ and  $ \boldsymbol{K}$ are the complete elliptic integrals of second and first kind respectively.
 From entry 3.184.1 p. 314 of \cite{gra}
\[
A_2(3,\tfrac14)=\int_0^1\frac{x^2}{\sqrt[4]{1-x^2}}{\rm d}x=\frac{2\sqrt2}{5}\left[2\boldsymbol{E}\left(\frac{1}{\sqrt2}\right)-\boldsymbol{K}\left(\tfrac{1}{\sqrt2}\right)\right]
\]
we obtain
\begin{equation}\label{3.184.1}
_2\mathrm{F}_1\left( \left. 
\begin{array}{c}
3;\frac14 \\[2mm]
\frac{15}{4}
\end{array}
\right|-1\right)=\frac{231\sqrt2}{320}\left[2\boldsymbol{E}\left(\frac{1}{\sqrt2}\right)-\boldsymbol{K}\left(\frac{1}{\sqrt2}\right)\right]
\end{equation}
Using formula 3.185.2 p. 314 of \cite{gra}
\[
A_2(1,\tfrac34)=\int_0^1\frac{{\rm d}x}{\sqrt[4]{(1-x^2)^3}}=\sqrt2 \boldsymbol{K}\left(\frac{1}{\sqrt2}\right)
\]
we infer
\begin{equation}\label{3.185.2}
_2\mathrm{F}_1\left( \left. 
\begin{array}{c}
1;\frac34 \\[2mm]
\frac{5}{4}
\end{array}
\right|-1\right)=\frac{\sqrt2}{4}\boldsymbol{K}\left(\frac{1}{\sqrt2}\right)
\end{equation}
Let us end the examples taken from \cite{gra} with entry 3.185.4 p. 314
\[
A_2(3,\tfrac34)=\int_0^1\frac{x^2}{\sqrt[4]{(1-x^2)^3}}{\rm d}x=\frac{2\sqrt2}{3} \boldsymbol{K}\left(\frac{1}{\sqrt2}\right)
\]
which leads to
\begin{equation}\label{3.185.4}
_2\mathrm{F}_1\left( \left. 
\begin{array}{c}
3;\frac34 \\[2mm]
\frac{13}{4}
\end{array}
\right|-1\right)=\frac{15}{32 \sqrt{2}}\boldsymbol{K}\left(\frac{1}{\sqrt2}\right)
\end{equation}

We start from entries 576.00 p. 256 and 578.00 p. 258 of \cite{by}
\begin{align}
\int_0^1\frac{{\rm d}x}{\sqrt{1-x^6}}=\frac{1}{\sqrt[4]{3}}\boldsymbol{K}\left(\frac{\sqrt6-\sqrt2}{4}\right)\label{576.00}\\
\int_0^\infty\frac{{\rm d}x}{\sqrt{1+x^6}}=\frac{2}{\sqrt[4]{27}}\boldsymbol{K}\left(\frac{\sqrt6-\sqrt2}{4}\right)\label{578.00}
\end{align}
Comparing \eqref{576.00} with \eqref{rela1} and \eqref{578.00} with \eqref{repeven} we obtain
\begin{align}
\label{576.00b}&\mathrm{F}_D^{(5)}\left( \left. 
\begin{array}{c}
1;\frac12 \\[2mm]
\frac{3}{2}
\end{array}
\right|\,x^{(6)}_1,x^{(6)}_2,x^{(6)}_3,x^{(6)}_4,x^{(6)}_5\right)=\frac{1}{4\sqrt[4]{3}}\,\boldsymbol{K}\left(\frac{\sqrt6-\sqrt2}{4}\right)\\
\label{578.00b}   &\mathrm{F}_D^{(6)}\left( \left. 
\begin{array}{c}
2;\frac12 \\[2mm]
3
\end{array}
\right|\,y^{(6)}_1,y^{(6)}_2,y^{(6)}_3,y^{(6)}_4,y^{(6)}_5,y^{(6)}_6\right)=\frac{4}{\sqrt[4]{27}}\,\boldsymbol{K}\left(\frac{\sqrt6-\sqrt2}{4}\right)
\end{align} 
where
\[
x^{(6)}_1=\frac{1}{2}+\frac{i \sqrt{3}}{2},\quad x^{(6)}_2-\frac{1}{2}+\frac{i
   \sqrt{3}}{2},\quad x^{(6)}_3=-1,\quad
   x^{(6)}_4=-\frac{1}{2}-\frac{i \sqrt{3}}{2},\quad x^{(6)}_5=\frac{1}{2}-\frac{i
   \sqrt{3}}{2}
\]
and
\[\begin{split}
&y^{(6)}_1=1+\frac{\sqrt{3}}{2}+\frac{i}{2},\quad y^{(6)}_2=1+i,\quad y^{(6)}_3=1-\frac{\sqrt{3}}
   {2}+\frac{i}{2}\\
  & y^{(6)}_4=1-\frac{\sqrt{3}}{2}-\frac{i}{2},\quad y^{(6)}_5=1-i,\quad y^{(6)}_6=1+\frac{\sqrt{3}}{2}-\frac{i}{2}
\end{split}
\]
Putting \eqref{repodd} in \eqref{578.00b}, we evaluate $\mathrm{F}_D^{(5)}$ when one of its arguments is 2. Defining
\[
z^{(6)}_1=\frac{1}{2}-\frac{i \sqrt{3}}{2},\quad z^{(6)}_2=\frac{3}{2}-\frac{i \sqrt{3}}{2},\quad z^{(6)}_3=2,\quad z^{(6)}_4=\frac{3}{2}+\frac{i \sqrt{3}}{2},\quad z^{(6)}_5=\frac{1}{2}+\frac{i
   \sqrt{3}}{2}
\]
we have
\begin{equation}\label{serretprol}
\mathrm{F}_D^{(5)}\left( \left. 
\begin{array}{c}
2;\frac12 \\[2mm]
3
\end{array}
\right|\,z^{(6)}_1,\,z^{(6)}_2,\,z^{(6)}_3,\,z^{(6)}_4,\,z^{(6)}_5,\right)=\frac{4}{\sqrt[4]{27}}\left(-\frac{\sqrt{3}}{2}+\frac{i}{2}\right)\,\boldsymbol{K}\left(\frac{\sqrt6-\sqrt2}{4}\right)
\end{equation}

Now let us pass to some integrals studied by Legendre, Richelot and Serret. Legendre, \cite{tr-2} p. 383, proved that 
\begin{equation}\label{leg1}
A_8(1,\tfrac12)=\int_0^1\frac{{\rm d}x}{\sqrt{1-x^8}}=\frac{1}{\sqrt2}\boldsymbol{K}\left({\sqrt2}-1\right),
\end{equation}
thus, comparing \eqref{leg1} with \eqref{rela1}, we get:
\begin{equation}
\mathrm{F}_D^{(7)}\left( \left. 
\begin{array}{c}
1;\frac12 \\[2mm]
\frac{3}{2}
\end{array}
\right|\frac{1+i}{\sqrt{2}},i,-\frac{1-i}{\sqrt{2}},-1,-\frac{1+i}{\sqrt{2}},-i,\frac{1-i}{\sqrt{2}}\right)=\frac{1}{2\sqrt2}\,\boldsymbol{K}\left({\sqrt2}-1\right)
\end{equation} 
Richelot in \cite{R} evaluated
\begin{equation}\label{ric1}
A_8(3,\tfrac12)=\int_0^1\frac{x^2}{\sqrt{1-x^8}}{\rm d}x=\left(1-\frac{1}{\sqrt2}\right)\boldsymbol{K}\left({\sqrt2}-1\right)
\end{equation} 
thus comparing \eqref{ric1} with \eqref{rela1} we obtain
\begin{equation}
\mathrm{F}_D^{(7)}\left( \left. 
\begin{array}{c}
3;\frac12 \\[2mm]
\frac{7}{2}
\end{array}
\right|\frac{1+i}{\sqrt{2}},i,-\frac{1-i}{\sqrt{2}},-1,-\frac{1+i}{\sqrt{2}},-i,\frac{1-i}{\sqrt{2}}\right)=\frac{15}{16}\left(1-\frac{1}{2\sqrt2}\right)\,\boldsymbol{K}\left({\sqrt2}-1\right)
\end{equation}
To establish further identities, we recall two remarkable formulae due to Legendre, and unrecognized by mathematicians of our age. In his famous Trait\'e, \cite{tr-2} on p. 377, equations (z) therein, we find, for  $2a<n$
\begin{equation}\label{z1}
\int_0^1\frac{x^{a-1}}{\sqrt{1-x^n}}\,{\rm d}x=\cos\left(\frac{a}{n}\pi\right)\int_0^\infty\frac{z^{a-1}}{\sqrt{1+z^n}}\,{\rm d}z.
\end{equation}
The second, for $n/2<a<n$, is
\begin{equation}\label{z2}
\int_0^\infty\frac{z^{n-a-1}}{\sqrt{1+z^n}}\,{\rm d}z\cdot\int_0^1\frac{x^{a-1}}{\sqrt{1-x^n}}\,{\rm d}x=\frac{2\pi}{n(2a-n)\sin(\frac{\pi}{n}a)}.
\end{equation}
We will not refer here the Legendre's path for establishing \eqref{z1} and \eqref{z2}, a beautiful treatment, whose original source is now easily accessible. We remark that \eqref{z1} plays a role in evaluating the integral \eqref{leg1}, so that it can also be used for the integral \eqref{ric1}. 

Now by formulae \eqref{z1} and \eqref{z2} let us pass to some evaluation of elliptic integrals related to \eqref{leg1} and \eqref{ric1}:
\begin{align}\label{leg1b}
&B_8(1,\tfrac12)=\int_0^\infty\frac{{\rm d}x}{\sqrt{1+x^8}}=\sqrt{2-\sqrt2}\,\boldsymbol{K}\left({\sqrt2}-1\right)\\
&B_8(3,\tfrac12)=\int_0^\infty\frac{x^2}{\sqrt{1+x^8}}\,{\rm d}x=\sqrt{2-\sqrt2}\,\boldsymbol{K}\left({\sqrt2}-1\right)\label{ricb}\\
&A_8(5,\tfrac12)=\int_0^1\frac{x^4}{\sqrt{1-x^8}}\,{\rm d}x=\frac{\pi}{8}\frac{\sqrt2}{\boldsymbol{K}\left({\sqrt2}-1\right)}\label{qua}\\
&A_8(7,\tfrac12)=\int_0^1\frac{x^6}{\sqrt{1-x^8}}\,{\rm d}x=\frac{\pi}{24}\frac{2+\sqrt2}{\boldsymbol{K}\left({\sqrt2}-1\right)}\label{sei}
\end{align}
Thus, taking advantage of \eqref{even} in \eqref{leg1b} and \eqref{ricb}, we see that
\begin{align}
\label{fd8a}\mathrm{F}_D^{(8)}\left( \left. 
\begin{array}{c}
3;\frac12 \\[2mm]
4
\end{array}
\right|x_1,\dots,x_8  \right)&=3\sqrt{2-\sqrt2}\,\boldsymbol{K}\left({\sqrt2}-1\right)\\
\label{fd8b}\mathrm{F}_D^{(8)}\left( \left. 
\begin{array}{c}
1;\frac12 \\[2mm]
4
\end{array}
\right|x_1,\dots,x_8 \right)&=3\sqrt{2-\sqrt2}\,\boldsymbol{K}\left({\sqrt2}-1\right)
\end{align}
with $x_1,\dots,x_8$ reciprocals of the roots of $u^8+(1-u)^8=0$ as given by \eqref{repeven}, namely:
\[
\begin{split}
x_1&=1+\frac{1}{2} i
   \sqrt{2-\sqrt{2}}+\frac{\sqrt{2+\sqrt{2}}}{2},\quad x_2= 1+\frac{\sqrt{2-\sqrt{2}
   }}{2}+\frac{1}{2} i
   \sqrt{2+\sqrt{2}}\\
   x_3&=1-\frac{\sqrt{2-\sqrt{2}}}{2}+\frac{1}{2} i
   \sqrt{2+\sqrt{2}},\quad x_4=1+\frac{1}{2} i
   \sqrt{2-\sqrt{2}}-\frac{\sqrt{2+\sqrt{2}}}{2},\\
   x_5&=1-\frac{1}{2} i
   \sqrt{2-\sqrt{2}}-\frac{\sqrt{2+\sqrt{2}}}{2},\quad x_6=1-\frac{\sqrt{2-\sqrt{2}
   }}{2}-\frac{1}{2} i
   \sqrt{2+\sqrt{2}}\\
   x_7&=1+\frac{\sqrt{2-\sqrt{2}}}{2}-\frac{1}{2} i
   \sqrt{2+\sqrt{2}},\quad x_8=1-\frac{1}{2} i
   \sqrt{2-\sqrt{2}}+\frac{\sqrt{2+\sqrt{2}}}{2}
   \end{split}
\]
We can also reduce the order of the Lauricella functions in \eqref{fd8a} and \eqref{fd8b} using Lemma 1.1 of \cite{jnt2} getting special values of ${\rm F}_D^{(7)}$ with some of the arguments lying in the positive real axis:
\begin{align}
\label{fd7a}\mathrm{F}_D^{(7)}\left( \left. 
\begin{array}{c}
3;\frac12 \\[2mm]
4
\end{array}
\right|z_1,\dots,z_7\right)&=\left(-\frac{3}{\sqrt{2}}+i \left(3-\frac{3}{\sqrt{2}}\right)\right) \,\boldsymbol{K}\left({\sqrt2}-1\right)\\
\label{fd7b}\mathrm{F}_D^{(7)}\left( \left. 
\begin{array}{c}
1;\frac12 \\[2mm]
4
\end{array}
\right|z_1,\dots,z_7 \right)&=\left(3 \left(\frac{1}{\sqrt{2}}-1\right)+\frac{3 i}{\sqrt{2}}\right) \,\boldsymbol{K}\left({\sqrt2}-1\right)
\end{align}
where
\[
z_1=1-\frac{1+i}{\sqrt{2}},\quad z_2=1-i,\quad z_3=1+\frac{1-i}{\sqrt{2}},\quad z_4=2,\quad z_5=1+\frac{1+i}{\sqrt{2}},\quad z_6=1+i,\quad z_7=1-\frac{1-i}{\sqrt{2}} 
\]

For \eqref{qua} and \eqref{sei} we use \eqref{rela1} obtaining
\begin{align}
\label{fd7c}\mathrm{F}_D^{(7)}\left( \left. 
\begin{array}{c}
5;\frac12 \\[2mm]
\frac{11}{2}
\end{array}
\right|\frac{1+i}{\sqrt{2}},i,-\frac{1-i}{\sqrt{2}},-1,-\frac{1+i}{\sqrt{2}},-i,\frac{1-i}{\sqrt{2}}\right)&=\frac{315 \pi }{1024 \sqrt{2} \boldsymbol{K}\left(\sqrt{2}-1\right)}\\
\label{fd7d}\mathrm{F}_D^{(7)}\left( \left. 
\begin{array}{c}
7;\frac12 \\[2mm]
\frac{15}{2}
\end{array}
\right|\frac{1+i}{\sqrt{2}},i,-\frac{1-i}{\sqrt{2}},-1,-\frac{1+i}{\sqrt{2}},-i,\frac{1-i}{\sqrt{2}} \right)&=\frac{1001 \left(2+\sqrt{2}\right) \pi }{16384 \boldsymbol{K}\left(\sqrt{2}-1\right)}
\end{align}

Let us add to our list of integrals one more, considered by Serret, \cite{S} p. 65 and Legendre, \cite{tr-1} chapter XXX pp. 201-202, but (again) ignored nowadays:
\begin{equation}\label{serrp65}
B_6(1,\tfrac13)=\int_0^\infty\frac{{\rm d}x}{\sqrt[3]{1+x^6}}=	\frac{\sqrt[3]{4}}{\sqrt[4]{3}}\,\boldsymbol{K}\left(\frac{\sqrt6-\sqrt2}{4}\right)
\end{equation}
Invoking again \eqref{repeven} we infer
\begin{equation}\label{serret}
\mathrm{F}_D^{(6)}\left( \left. 
\begin{array}{c}
1;\frac13 \\[2mm]
2
\end{array}
\right|\,x^{(6)}_1,\,x^{(6)}_2,\,x^{(6)}_3,\,x^{(6)}_4,\,x^{(6)}_5,\,x^{(6)}_6 \right)=\frac{\sqrt[3]{4}}{\sqrt[4]{3}}\,\boldsymbol{K}\left(\frac{\sqrt6-\sqrt2}{4}\right)
\end{equation}
where
\[
\begin{split}
x^{(6)}_1&=\frac{\sqrt{3}}{2}+1+\frac{i}{2},\quad x^{(6)}_2=1+i,\quad x^{(6)}_3=1-\frac{\sqrt{3}}{2}+\frac{i}{2},\\
x^{(6)}_4&=1-\frac{\sqrt{3}}{2}-\frac{i}{2},\quad x^{(6)}_5=1-i,\quad x^{(6)}_6=\frac{\sqrt{3}}{2}+1-\frac{i}{2}
\end{split}
\]
Eventually from \eqref{repodd} we get an evaluation of the analytic continuation of $\mathrm{F}_D^{(5)}$. Putting
\[
z^{(6)}_1=\frac{1}{2}-\frac{i \sqrt{3}}{2},\quad z^{(6)}_2=\frac{3}{2}-\frac{i \sqrt{3}}{2},\quad z^{(6)}_3=2,\quad z^{(6)}_4=\frac{3}{2}+\frac{i \sqrt{3}}{2},\quad z^{(6)}_5=\frac{1}{2}+\frac{i
   \sqrt{3}}{2}
\]
we have
\begin{equation}\label{serretprol2}
\mathrm{F}_D^{(5)}\left( \left. 
\begin{array}{c}
1;\frac13 \\[2mm]
2
\end{array}
\right|\,z^{(6)}_1,\,z^{(6)}_2,\,z^{(6)}_3,\,z^{(6)}_4,\,z^{(6)}_5\right)=\frac{\sqrt[3]{4}}{\sqrt[4]{3}}\left(-\frac{\sqrt{3}}{2}+\frac{i}{2}\right)\,\boldsymbol{K}\left(\frac{\sqrt6-\sqrt2}{4}\right)
\end{equation}
In Chapter XXX section 164 pp. 205-206 of \cite{tr-1} Legendre evaluated an integral similar to \eqref{serrp65}
\begin{equation}\label{capXXX205}
A_6(1,\tfrac13)=\int_0^1\frac{{\rm d}x}{\sqrt[3]{1-x^6}}=\frac{\sqrt[3]{4}}{\sqrt[4]{27}}\,\boldsymbol{K}\left(\frac{\sqrt6-\sqrt2}{4}\right)
\end{equation}
So using \eqref{rela1} we get the identity
\begin{equation}\label{capXXX205b}
\mathrm{F}_D^{(5)}\left( \left. 
\begin{array}{c}
1;\frac13 \\[2mm]
\frac53
\end{array}
\right|\,w^{(6)}_1,\,w^{(6)}_2,\,w^{(6)}_3,\,w^{(6)}_4,\,w^{(6)}_5\right)=\frac{\sqrt[3]{32}}{\sqrt[4]{2187}}\,\boldsymbol{K}\left(\frac{\sqrt6-\sqrt2}{4}\right)
\end{equation}
where
\[
w^{(6)}_1=\frac{1}{2}+\frac{i \sqrt{3}}{2},\quad \,w^{(6)}_2=-\frac{1}{2}+\frac{i
   \sqrt{3}}{2},\quad \,w^{(6)}_3=-1,\quad \,w^{(6)}_4=-\frac{1}{2}-\frac{i \sqrt{3}}{2},\quad \,w^{(6)}_5=\frac{1}{2}-\frac{i
   \sqrt{3}}{2}
\]
Furthermore in this special situation, equation \eqref{id:3}, i.e. Lemma 1.1 of \cite{jnt2}, can also be used for an integral of the $A$ family, in \eqref{capXXX205b} yielding
\begin{equation}
\mathrm{F}_D^{(4)}\left( \left. 
\begin{array}{c}
1;\frac13 \\[2mm]
\frac53
\end{array}
\right|\frac{3}{2}+\frac{i \sqrt{3}}{2},1+i \sqrt{3},i \sqrt{3},-\frac{1}{2}+\frac{i \sqrt{3}}{2}\right)=\left(\frac12+\frac{\sqrt3}{2}i\right)\frac{\sqrt[3]{32}}{\sqrt[4]{2187}}\,\boldsymbol{K}\left(\frac{\sqrt6-\sqrt2}{4}\right)
\end{equation}
\section{Hypergeometric values from reduction of some hyperelliptic integrals}

In this section we are going to compute further hypergeometric values by an altogether different technique, namely the reduction of hyperelliptic integrals to elliptic ones. 

Our first concern will be the genus of an orientable surface as preliminary to that of the hyperelliptic integrals. In \cite{Riemann1857} Riemann introduced  the genus $g$ ({\it Geschlecht}) of a surface in order to classify it: $2g+1$ is the number of closed cuts necessary for dividing it in two apart pieces. The genus of a surface characteres the connectivity of it: every closed orientable surface is topologically equivalent to a sphere with $g$ handles: the integer  $g$ is called the {\it genus} of such a surface. Thus, a sphere has genus 0, a torus genus 1, and the digit \lq\lq8'', with its two holes,  genus 2. By means of $2g$ pairs of closed cuts, a surface of genus $g$ can be transformed into a simply connected one, that is, a surface divisible into two parts by an arbitrary closed curve other than one lying on the boundary of the surface. For instance, a torus can be transformed into a simply connected surface by cuts along a meridian and a parallel. Now let us take into account the Abelian integral 
\begin{equation}\label{hypp}
\int R(z,w)\,{\rm d}z
\end{equation}
where  $R(z,w)$ is a rational function of variables $z$ and $w$, related by an algebraic equation of the type $w^2=P(z)$ where $P(z)$ is a polynomial of degree $m\geq 5$ without multiple roots. For  $m=3$ the integral \eqref{hypp} collapses to an elliptic one, while it is hyperelliptic for each $m\geq 5$. The above mentioned relationship between $w$ and $z$ corresponds to a two-sheeted compact Riemann surface of genus 
\[
g=
\begin{cases}
\dfrac{m-2}{2}\quad\text{if  $m$ is even}\\[2mm]
\dfrac{m-1}{2}\quad\text{if  $m$ is odd}
\end{cases}
\]
thus, for hyperelliptic integrals it will be the case that $g\geq 2$. Such integrals are found, for example, to provide a time for the particle motion in higher dimensional axially symmetric space–times. For some dimensions, time will be given by a hyperelliptic integral so that the dynamical problem solution will require its inversion in order to get  the instantaneous position of the particle, namely the relevant vectorial function $\overrightarrow{r} =\overrightarrow{r}(t) $. If the genus is $g = 1$, then such an inversion can result in elliptic functions, namely doubly periodic functions of one complex variable. The invertibility does depend on the nature of the involved integrals because such an inversion is possible only if they can be reduced to first kind $F(\varphi, k)$ ones. For higher {\it genera}, $g>1,$ the inversion becomes impossible because, as Jacobi recognized, $2g$-periodic functions of one variable do not exist. He solved this contradiction by formulating his famous {\it Jacobi inversion problem} that involves $g$  hyperelliptic integrals. The problem was solved
(G\"{o}pel, Rosenhain, Riemann) in terms of so called {\it hyperelliptic functions} which are indeed $2g$-periodic functions for $g>1$ which depend on $g$ variables while the periods (also called {\it moduli}) are $g \times g$--matrices. The domain of such functions, the Jacobi variety, is thus the $g$-dimensional complex space $C_{g}$ quotiented by the period lattice.
The Jacobi inversion problem  helped to solve some mechanical systems such as the Neumann geodesic on an ellipsoid, the spinning top of Kovalewskaja, the Kirchhoff motion of a rigid body in a fluid, and many others. This special type of integrability, called {\it algebro--geometric integrability}, has been receiving much attention owing to the discovery of many partial differential equations of Korteveg--de Vries type with connections to soliton theory.

The above concept of genus will be kept apart from the genus $g^{*}$ of a curve introduced by A. Clebsch \cite{Clebsch1865} 
which is a number characterizing an algebraic curve $f(x, y)$ of the $m{\rm th}$ degree
$$
g^{*}=\frac{(m-1)(m-2)}{2}-\delta
$$
as the difference between the highest possible number of double points an $m{\rm th}$ order curve can have and the actual number of its double points.
 When more complex singular points are present, they are counted as the corresponding number of double points; for example, a {\it cusp} is counted as one double point, and a triple point is counted as three.
 
 Genus, as spoken of, means the highest possible number of singularities for an $n{\rm th}$ order algebraic curve: for instance a planar cubic without  double points has genus 1; with only one double point, then $g^{*}=0$. Second-order curves are of genus 0. All the curves having $g^{*}=0$ are called {\it unicursal}, due to its being possible to
assign their coordinates as  rational functions of a parameter.  A beautiful theorem of Clebsch (1864) established that  the coordinates of any curve of genus 1 can be given as Jacobian elliptic functions of a parameter.
 Third-order curves can be of genus 0 or 1. For example for $y -x^{3}= 0 $ we have $g^{*}=1$. On the other hand, the semicubical parabola $y^{2} - x^{3} = 0$, characterized by one cusp will have $g^{*}=0$. The same is true for the {\it folium of Descartes} $x^{3}+ y^{3} -3axy = 0$, which has one double point.

In our previous papers, \cite{jnt1,jnt2} we started with the employ of some famous reduction schemes of hyperelliptic integrals to elliptic in order to evaluate special values of hypergeometric functions arising from the integral representation theorem, \eqref{irto}.
For the purpose let us recall two of our previous identities.
First, formula (4.5) of \cite{jnt1}, theorem 4.3, which, by means of Jacobi reduction  \cite{cre}, changes to elliptic a genus 2 hyperelliptic integral:
\begin{equation}\label{jac}
\int_{0}^{1}\frac{\left( \sqrt{ab}+z \right) \,{\rm d}z }{\sqrt{z
(z -1)(z -ab)(z -a)(z -b)}}=\frac{1}{\sqrt{(1-a)(1-b)}}%
\int_{0}^{1}\frac{{\rm d}x }{\sqrt{x (1-x )(1-cx )}}
\end{equation}
where $a>b>1$ and
\begin{equation*}
c=-\frac{\left( \sqrt{a}-\sqrt{b}\right) ^{2}}{(1-a)(1-b)}.  \label{ci}
\end{equation*}
Formula \eqref{jac} stems from the second degree transformation:
\[
x =\frac{(1-a)(1-b)z }{(z -a)(z -b)}.
\]
Additionally, let us quote identity (35), theorem 4.1 of \cite{jnt2} founded on the first Hermite reduction \cite{her}, linking again a hyperelliptic integral of genus 2 to an elliptic integral:
\begin{equation}
\int_{z_{1}}^{\infty }\frac{z}{\sqrt{(z^{2}-a)(4z^{3}-3az-b)}}\,
\mathrm{d}z=\frac{1}{\sqrt{6}}\int_{-2z_{1}}^{\infty }\frac{\mathrm{d}y}{\sqrt{y^{3}-3ay+2b}}.  \label{ugu}
\end{equation}
Formula \eqref{ugu} comes from a change of third degree:
\[
y=\frac{2(z^{3}-b)}{3(z^{2}-a)}
\]
$z_1$ being a root of $4z^{3}-3az-b=0$ while $2z_1$ solves $y^{3}-3ay+2b=0.$

Hereafter we go on with the review of reductions including formulae due to Goursat \cite{goursat}, Hermite \cite{her}, considering further reductions of hyperelliptic integrals of a genus 2 and 3, the last being reduced through a sixth degree transformation of variables, \cite{herovres}.
Then we will consider yet higher order ones, founded upon contributions of Belokolos et al. \cite{Belokolos1986}, Eilbeck and Enol'skii \cite{eilbeck1994elliptic}, Enol'skii and Kostov \cite{enol1994geometry} and  Maier \cite{maier2008}.

\subsection{Goursat reduction: genus 2}
In a 1885 article \cite{goursat}, Goursat found a reduction formula for a hyperelliptic integral of genus $g=2$. Through the change of variables:
\[
x=\frac{t^3+a t+b}{3 t-p}
\]
starting from an elliptic integral, he obtained the reduction formula:
\begin{equation}\label{redg}
\begin{split}
&\int\frac{{\rm d}x}{\sqrt{x \left(4 (3 x-a)^3-27 (b+p x)^2\right)}}=\\
&\int\frac{-a p-3 b-3 t^2 (p-2 t)}{\sqrt{\left(a t+b+t^3\right) \left(a p+3 b+3 t^2 (p-2 t)\right)^2 \left(4 (a
   p+3 b)+3 p t^2+3 t^3\right)}} \,{\rm d}t
   \end{split}
\end{equation}
Once a suitable interval of integration is chosen, it is possible to remove the sign uncertainty and simplify \eqref{redg} obtaining
\begin{equation}\label{redg1}
\int\frac{{\rm d}x}{\sqrt{x \left(4 (3 x-a)^3-27 (b+p x)^2\right)}}=\pm\int\frac{{\rm d}t}{\sqrt{(a t+b+t^3)( 4 (a p+3 b)+3 p t^2+3 t^3)}}.
\end{equation}
If we make the simple choice to take $a=p=0$ and to integrate in $[(b^2/4)^{1/3},\infty),$ on the left hand side of \eqref{redg1} we get
\begin{equation}\label{redg2}
\int_{\left(\frac{b^2}{4}\right)^{1/3}}^\infty\frac{{\rm d}x}{\sqrt{27x(4x^3-b^2)}}=\int_{\left(\frac{b}{2}\right)^{1/3}}^\infty\frac{{\rm d}t}{\sqrt{3(t^3+b)(t^3+4b)}}.
\end{equation}
Normalizing the interval of integration, we see that the most natural choice $b=2$ is not restrictive, so we can use, as starting point to find our next identity, the reduction:
\begin{equation}\label{dig}
\int_1^\infty\frac{{\rm d}x}{\sqrt{x(x^3-1)}}=6\int_1^\infty\frac{{\rm d}t}{\sqrt{(t^3+2)(t^3+8)}}.
\end{equation}
The hyperelliptic integral on the right hand side of \eqref{dig} is a particular case of the family of integrals described by the following lemma.
\begin{lemma}\label{lle1}
Let $2n-m>0,\,a,\,b>0$ then
\begin{equation}\label{lem1}
\int_1^\infty\frac{{\rm d}t}{\sqrt[m]{(t^n+a)(t^n+b)}}=\frac{m}{2n-m}\,\mathrm{F}_{1}\left( \left. 
\begin{array}{c}
\frac{2n-m}{mn};\frac{1}{m},\frac{1}{m} \\[2mm]
\frac{2n-m+mn}{mn}
\end{array}
\right| -a,-b\right).
\end{equation}
\end{lemma}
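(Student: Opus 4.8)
The plan is to reduce the left-hand side to a Eulerian-type integral on $[0,1]$ by a single change of variable, then to recognize the resulting integrand as the kernel of the Appell ${\rm F}_1$ integral representation theorem \eqref{F1}. First I would substitute $t^n = 1/u$, i.e. $u = t^{-n}$, so that as $t$ runs over $[1,\infty)$ the new variable $u$ runs over $(0,1]$; we have $t = u^{-1/n}$ and ${\rm d}t = -\tfrac{1}{n}u^{-1/n-1}{\rm d}u$. Then $t^n + a = u^{-1}(1+au)$ and $t^n + b = u^{-1}(1+bu)$, hence $(t^n+a)(t^n+b) = u^{-2}(1+au)(1+bu)$, and its $m$-th root is $u^{-2/m}\bigl((1+au)(1+bu)\bigr)^{1/m}$. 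Collecting the powers of $u$, the integrand becomes
\[
\frac{u^{2/m}}{\bigl((1+au)(1+bu)\bigr)^{1/m}}\cdot\frac{1}{n}u^{-1/n-1}\,{\rm d}u
=\frac{1}{n}\,\frac{u^{\frac{2}{m}-\frac{1}{n}-1}}{(1+au)^{1/m}(1+bu)^{1/m}}\,{\rm d}u,
\]
so that
\[
\int_1^\infty\frac{{\rm d}t}{\sqrt[m]{(t^n+a)(t^n+b)}}
=\frac{1}{n}\int_0^1\frac{u^{\frac{2n-m}{mn}-1}}{(1+au)^{1/m}(1+bu)^{1/m}}\,{\rm d}u.
\]

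Next I would match this against \eqref{F1}. Comparing exponents, set $a$ (the first Appell parameter) equal to $\tfrac{2n-m}{mn}$, set $b_1=b_2=\tfrac1m$, and choose $c$ so that the factor $(1-u)^{c-a-1}$ disappears, i.e. $c-a-1=0$, giving $c = 1+\tfrac{2n-m}{mn} = \tfrac{2n-m+mn}{mn}$; the arguments are $x_1=-a$, $x_2=-b$. The hypotheses $2n-m>0$ and $a,b>0$ guarantee $\re{a}>0$ and $\re{c-a}=1>0$, so the integral representation theorem applies and the integral converges at $u=0$. The Gamma prefactor in \eqref{F1} is $\Gamma(c)/(\Gamma(a)\Gamma(c-a)) = \Gamma\!\bigl(1+\tfrac{2n-m}{mn}\bigr)/\bigl(\Gamma\!\bigl(\tfrac{2n-m}{mn}\bigr)\,\Gamma(1)\bigr) = \tfrac{2n-m}{mn}$ by the functional equation $\Gamma(z+1)=z\Gamma(z)$. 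Therefore $\int_0^1 u^{\frac{2n-m}{mn}-1}(1+au)^{-1/m}(1+bu)^{-1/m}{\rm d}u = \tfrac{mn}{2n-m}\,{\rm F}_1$, and multiplying by $1/n$ yields exactly $\tfrac{m}{2n-m}\,{\rm F}_1$ with the stated parameters, which is \eqref{lem1}.

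The only subtlety worth checking carefully is the behaviour of the arguments $-a,-b$: since $a,b>0$ these lie on the negative real axis, hence inside the cut plane $\mathbb{C}\setminus[1,\infty)$ where the integral representation \eqref{F1} furnishes the analytic continuation of Appell's ${\rm F}_1$, so no convergence-of-series issue arises and the identity is valid as stated. I expect this step — confirming that the substitution lands us squarely in the domain of validity of \eqref{F1} and that the endpoint $u=0$ is integrable precisely under $2n-m>0$ — to be the main (and only mild) obstacle; the algebra of the substitution itself is routine.
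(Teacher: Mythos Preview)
Your proof is correct and follows exactly the same route as the paper: the authors also perform the change of variable $t=1/u^{1/n}$ (equivalently $u=t^{-n}$) and then invoke the integral representation \eqref{F1} for the Appell ${\rm F}_1$. You have simply spelled out in detail what the paper dismisses in one line.
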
 
\begin{proof}
Condition $2n-m>0$ ensures the convergence of the integral at the right hand side of \eqref{lem1}. Thesis \eqref{lem1} follows immediately from the integral representation theorem for the Appell ${\rm F}_1$ after the change of variables $t=1/u^{1/n}$.
\end{proof}
From \eqref{dig} and Lemma \ref{lle1} we infer the following new evaluation of the analytic continuation of ${\rm F}_1.$
\begin{theorem}\label{Gou0}
\begin{equation}\label{bg00}
\mathrm{F}_{1}\left( \left. 
\begin{array}{c}
\frac23;\frac12,\frac12 \\[2mm]
\frac53
\end{array}
\right| -2,-8\right)=\frac{1}{\pi\sqrt[3]{16}\sqrt{3}}\,\Gamma^3\left(1/3\right).
\end{equation}
\end{theorem}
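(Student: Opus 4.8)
The plan is to compute both sides of the Goursat reduction \eqref{dig} independently as explicit Gamma-function expressions, and then equate them. The right-hand side of \eqref{dig} is $6\int_1^\infty\frac{{\rm d}t}{\sqrt{(t^3+2)(t^3+8)}}$, which is exactly the case $n=3$, $m=2$, $a=2$, $b=8$ of Lemma \ref{lle1}; note $2n-m=4>0$ so the hypotheses are met. Applying \eqref{lem1} gives
\[
6\int_1^\infty\frac{{\rm d}t}{\sqrt{(t^3+2)(t^3+8)}}=6\cdot\frac{2}{4}\,\mathrm{F}_{1}\left( \left.
\begin{array}{c}
\frac23;\frac12,\frac12 \\[2mm]
\frac53
\end{array}
\right| -2,-8\right)=3\,\mathrm{F}_{1}\left( \left.
\begin{array}{c}
\frac23;\frac12,\frac12 \\[2mm]
\frac53
\end{array}
\right| -2,-8\right).
\]

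Next I would evaluate the left-hand side $\int_1^\infty\frac{{\rm d}x}{\sqrt{x(x^3-1)}}$ in closed form. The substitution $x=1/s$ (or equivalently $x^3=1/v$) turns this into a Beta integral: with $x=v^{-1/3}$ one gets $\int_1^\infty\frac{{\rm d}x}{\sqrt{x(x^3-1)}}=\frac13\int_0^1 v^{-2/3}(1-v)^{-1/2}\,{\rm d}v=\frac13 B\!\left(\tfrac13,\tfrac12\right)=\frac13\,\frac{\Gamma(1/3)\Gamma(1/2)}{\Gamma(5/6)}$. This can be recognized as a special value of $A_3$-type integrals, but more to the point it should be massaged into the target shape $\frac{1}{\pi\sqrt[3]{16}\sqrt3}\Gamma^3(1/3)$ using standard Gamma identities: the reflection formula $\Gamma(5/6)\Gamma(1/6)=\pi/\sin(\pi/6)=2\pi$, the triplication (Gauss multiplication) formula relating $\Gamma(1/6)$, $\Gamma(1/2)$, $\Gamma(5/6)$ to $\Gamma(1/2)$ and a power of $3$, and $\Gamma(1/2)=\sqrt\pi$. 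Carrying this through expresses $\frac13 B(1/3,1/2)$ as a constant times $\Gamma^3(1/3)$ divided by $\pi$ and a surd.

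Equating the two evaluations and dividing by $3$ yields \eqref{bg00}. The only real obstacle is the bookkeeping in the last step: one must chase the correct powers of $2$ and $3$ through the multiplication/reflection formulae so that the constant emerges precisely as $1/(\pi\sqrt[3]{16}\sqrt3)$ rather than an equivalent but differently-written surd; I would double-check this by a quick numerical evaluation of both $\frac13 B(1/3,1/2)$ and $\frac{\Gamma^3(1/3)}{\pi\sqrt[3]{16}\sqrt3}$. A minor secondary point is verifying that the endpoint normalization in \eqref{redg2}–\eqref{dig} (the choice $b=2$ and the sign $+$) is the correct branch, which is already asserted in the text preceding the lemma, so I would simply invoke it.
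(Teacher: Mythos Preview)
Your approach is essentially identical to the paper's: it too applies Lemma~\ref{lle1} to the right side of \eqref{dig} to get $3\,\mathrm{F}_1(\ldots)$, evaluates the left side as a Beta integral (the paper routes this through $\int_0^1(1-x^3)^{-1/2}\,\mathrm{d}x$, which is the same computation after $x\mapsto 1/x$), and then simplifies the Gamma quotient via the reflection formula. Your caution about the final constant bookkeeping is well placed; otherwise there is nothing to add.
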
 
\begin{proof}
Using Lemma \ref{lle1} in equation \eqref{dig} we see that
\begin{equation}\label{dig1}
\int_1^\infty\frac{{\rm d}x}{\sqrt{x(x^3-1)}}=3\mathrm{F}_{1}\left( \left. 
\begin{array}{c}
\frac23;\frac12,\frac12 \\[2mm]
\frac53
\end{array}
\right| -2,-8\right)
\end{equation}
But we also have
\begin{equation}\label{bet}
\int_1^\infty\frac{{\rm d}x}{\sqrt{x(x^3-1)}}=\int_0^1\frac{{\rm d}x}{\sqrt{1-x^3}}=\frac{\sqrt\pi}{3}\,\frac{\Gamma(1/3)}{\Gamma(5/6)}
\end{equation}
Thesis \eqref{bg00} follows by equating \eqref{dig1} and \eqref{bet} and using the Euler reflection formula.
\end{proof}
\begin{remark}
If, instead of using the Euler function to evaluate the integral in \eqref{bet}, we employ the elliptic integral of first kind, by \cite{by} entry 244.00 p. 92 we get: 
\begin{equation}\tag{\ref{bg00}a}
\mathrm{F}_{1}\left( \left. 
\begin{array}{c}
\frac23;\frac12,\frac12 \\[2mm]
\frac53
\end{array}
\right| -2,-8\right)=\frac{1}{3^{5/4}}\,F\left(\arccos(2-\sqrt3),\frac{\sqrt6+\sqrt2}{4}\right).
\end{equation}

\end{remark}

In order to evaluate the Appell ${\rm F}_1$ not in its analytic continuation, but just in the convergence domain of the double power series which defines it, we present a slight modification of the Goursat reduction formula,
\begin{equation}\label{gb0}
\int_0^1\frac{{\rm d}x}{\sqrt{1-x^3}}=\int_0^1\frac{6}{\sqrt{(t^3+2)(t^3+8)}}{\rm d}t.
\end{equation} 
Identity \eqref{gb0} comes from the cubic change of variable
\[
x=\frac{3t}{t^3+2}.
\]
The hyperelliptic integral at the right hand side of \eqref{gb0} is computable through the Appell ${\rm F}_1$ hypergeometric function of two variables. In fact we have, in analogy with Lemma \ref{lle1}:
\begin{lemma}\label{llem2}
Let $a,\,b>0$ then
\begin{equation}\label{lem2}
\int_0^1\frac{{\rm d}t}{\sqrt[m]{(t^n+a)(t^n+b)}}=\frac{1}{\sqrt[m]{ab}}\,
\mathrm{F}_{1}\left( \left. 
\begin{array}{c}
\frac{1}{n};\frac{1}{m},\frac{1}{m} \\[2mm]
\frac{1}{n}+1
\end{array}
\right| -\frac{1}{a},-\frac{1}{b}\right).
\end{equation}

\end{lemma}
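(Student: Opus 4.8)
The plan is to imitate the proof of Lemma~\ref{lle1}, reducing the integral directly to the Euler-type integral representation \eqref{F1} for the Appell ${\rm F}_1$ function. First I would perform the substitution $t = u^{1/n}$, i.e. $u = t^n$, so that $dt = \tfrac1n u^{1/n-1}\,du$; as $t$ runs over $[0,1]$ the new variable $u$ also runs over $[0,1]$. Under this change the integrand becomes
\[
\frac{1}{\sqrt[m]{(t^n+a)(t^n+b)}}\,{\rm d}t = \frac1n\,\frac{u^{1/n-1}}{\bigl((u+a)(u+b)\bigr)^{1/m}}\,{\rm d}u.
\]
Next I would pull out the constants $a$ and $b$ from the two factors in the denominator, writing $u+a = a\,(1 - (-1/a)u)$ and $u+b = b\,(1-(-1/b)u)$, so that the denominator becomes $(ab)^{1/m}\bigl(1+\tfrac{u}{a}\bigr)^{1/m}\bigl(1+\tfrac{u}{b}\bigr)^{1/m}$, producing the prefactor $1/\sqrt[m]{ab}$ and the two arguments $-1/a,\,-1/b$ that appear in the statement.

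The resulting integral is $\displaystyle \frac{1}{n\sqrt[m]{ab}}\int_0^1 \frac{u^{1/n-1}(1-u)^{0}}{(1-(-1/a)u)^{1/m}(1-(-1/b)u)^{1/m}}\,{\rm d}u$, which I would match against \eqref{F1} with the identifications $a_{\text{F}} = 1/n$, $c_{\text{F}}-a_{\text{F}}-1 = 0$ (hence $c_{\text{F}} = 1/n+1$), $b_1 = b_2 = 1/m$, $x_1 = -1/a$, $x_2 = -1/b$. The Gamma prefactor in \eqref{F1} is $\Gamma(c_{\text{F}})/(\Gamma(a_{\text{F}})\Gamma(c_{\text{F}}-a_{\text{F}})) = \Gamma(1/n+1)/(\Gamma(1/n)\Gamma(1)) = (1/n)\,\Gamma(1/n)/\Gamma(1/n) \cdot (1/n)^{-1}$; more cleanly, $\Gamma(1/n+1) = (1/n)\Gamma(1/n)$ and $\Gamma(1) = 1$, so the prefactor equals $1/n$. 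Therefore $\int_0^1 \frac{u^{1/n-1}}{(1+u/a)^{1/m}(1+u/b)^{1/m}}\,{\rm d}u = n\,{\rm F}_1\!\left(\begin{smallmatrix}1/n;\,1/m,\,1/m\\ 1/n+1\end{smallmatrix}\,\middle|\,-1/a,-1/b\right)$, and combining with the $1/(n\sqrt[m]{ab})$ factor gives exactly \eqref{lem2}.

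There is essentially no serious obstacle here; the only things to check are convergence and the domain of validity of \eqref{F1}. At $u=0$ the factor $u^{1/n-1}$ is integrable since $1/n>0$, and at $u=1$ the integrand is bounded because $a,b>0$ keep the denominator away from zero on $[0,1]$, so the integral converges. The integral representation theorem \eqref{F1} requires $\re{a_{\text{F}}}>0$ and $\re{c_{\text{F}}-a_{\text{F}}}>0$, i.e. $1/n>0$ and $1>0$, both automatic; moreover the arguments $-1/a$ and $-1/b$ are real and negative, hence lie in $\mathbb{C}\setminus[1,\infty)$ where \eqref{F1} furnishes the analytic continuation, so no restriction $|x_k|<1$ is needed. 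The mild subtlety worth a sentence in the write-up is that when $a$ or $b$ is small the argument $-1/a$ falls outside the unit polydisc, so the right-hand side is to be read as the analytically continued ${\rm F}_1$ rather than its defining double series; this is consistent with the conventions fixed in Section~2. Thus the lemma follows, in complete parallel with Lemma~\ref{lle1}, the only difference being that here the lower limit $0$ rather than $\infty$ dictates the substitution $u=t^n$ in place of $t=1/u^{1/n}$ and leads to the parameters $1/n,\ 1/n+1$ instead of $(2n-m)/(mn),\ (2n-m+mn)/(mn)$.
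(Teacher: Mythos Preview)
Your argument is correct and is exactly the approach the paper intends: it states the lemma ``in analogy with Lemma~\ref{lle1}'' without writing out a proof, and your substitution $u=t^n$ followed by matching to the integral representation \eqref{F1} with $a_{\mathrm F}=1/n$, $c_{\mathrm F}=1/n+1$, $b_1=b_2=1/m$ is precisely that analogy. The convergence and domain checks you add are appropriate and nothing further is needed.
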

From Lemma \ref{llem2} we get
\begin{equation}\label{bg011}
\int_0^1\frac{{\rm d}t}{\sqrt{(t^3+2)(t^3+8)}}=\frac14\,\mathrm{F}_{1}\left( \left. 
\begin{array}{c}
\frac13;\frac12,\frac12 \\[2mm]
\frac43
\end{array}
\right| -\frac12,-\frac18\right).
\end{equation}
We can now state a further identity involving $\pi$ which concerns a new evaluation of the Appell ${\rm F}_1$.
\begin{theorem}\label{Gou1}
\begin{equation}\label{bg01}
\mathrm{F}_{1}\left( \left. 
\begin{array}{c}
\frac13;\frac12,\frac12 \\[2mm]
\frac43
\end{array}
\right| -\frac12,-\frac18\right)=\frac{1}{\pi\sqrt{27}\sqrt[3]{2}}\,\Gamma^3\left(\frac13\right).
\end{equation}
\end{theorem}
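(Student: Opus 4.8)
The plan is to reproduce, on the \emph{bounded} integral $A_3(1,\tfrac12)=\int_0^1{\rm d}x/\sqrt{1-x^3}$, the same double-evaluation mechanism that produced Theorem~\ref{Gou0}. First I would read the reduction \eqref{gb0} together with \eqref{bg011}: the right-hand side of \eqref{gb0} is $6\int_0^1{\rm d}t/\sqrt{(t^3+2)(t^3+8)}$, and Lemma~\ref{llem2} (taken with $m=2$, $n=3$, $a=2$, $b=8$) evaluates the latter integral as $\tfrac14\,\mathrm{F}_{1}$ of precisely the parameters and arguments appearing in \eqref{bg01}. Hence
\[
\int_0^1\frac{{\rm d}x}{\sqrt{1-x^3}}=\frac{3}{2}\,\mathrm{F}_{1}\left( \left.
\begin{array}{c}
\frac13;\frac12,\frac12 \\[2mm]
\frac43
\end{array}
\right| -\frac12,-\frac18\right).
\]
On the other hand this integral is elementary: the substitution $u=x^3$ turns it into $\tfrac13 B(\tfrac13,\tfrac12)=\tfrac{\sqrt\pi}{3}\,\Gamma(1/3)/\Gamma(5/6)$, the value already recorded in \eqref{bet}. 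Equating the two expressions gives $\mathrm{F}_{1}(\cdots)=\tfrac{2\sqrt\pi}{9}\,\Gamma(1/3)/\Gamma(5/6)$, so the whole matter reduces to rewriting $\Gamma(5/6)$ in terms of $\Gamma(1/3)$.

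For that last step I would combine three classical Gamma identities: the reflection formula $\Gamma(1/3)\Gamma(2/3)=2\pi/\sqrt3$, the reflection formula $\Gamma(1/6)\Gamma(5/6)=2\pi$, and the Legendre duplication formula $\Gamma(z)\Gamma(z+\tfrac12)=2^{1-2z}\sqrt\pi\,\Gamma(2z)$ at $z=1/6$, i.e. $\Gamma(1/6)\Gamma(2/3)=2^{2/3}\sqrt\pi\,\Gamma(1/3)$. Eliminating $\Gamma(2/3)$ and then $\Gamma(1/6)$ yields $\Gamma(5/6)=2^{4/3}\pi^{3/2}/(\sqrt3\,\Gamma^2(1/3))$; substituting this into $\mathrm{F}_1(\cdots)=\tfrac{2\sqrt\pi}{9}\,\Gamma(1/3)/\Gamma(5/6)$ and simplifying the powers of $2$ and $3$ (namely $2/2^{4/3}=2^{-1/3}$ and $\sqrt3/9=3^{-3/2}=1/\sqrt{27}$) produces exactly $\Gamma^3(1/3)/(\pi\sqrt{27}\sqrt[3]{2})$, which is \eqref{bg01}.

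There is no genuinely hard step here; the only point that deserves care, if one wants the argument self-contained, is the justification of the reduction \eqref{gb0} itself. Under $x=3t/(t^3+2)$ one computes ${\rm d}x=6(1-t^3)(t^3+2)^{-2}\,{\rm d}t$, while the algebraic identity $(t^3+2)^3-27t^3=(t^3-1)^2(t^3+8)$ holds because $u=1$ is a double root of $u^3+6u^2-15u+8$; consequently $\sqrt{1-x^3}=(1-t^3)\sqrt{t^3+8}\,(t^3+2)^{-3/2}$ on $[0,1]$, the absolute value being resolved by $t^3\le1$ there, and the factors $1-t^3$ cancel to give \eqref{gb0}. Granting \eqref{gb0}, \eqref{bg011} and \eqref{bet}, all already in the text, the proof is just the Gamma-function bookkeeping described above.
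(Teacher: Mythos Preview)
Your proof is correct and follows essentially the same route as the paper: combine the reduction \eqref{gb0} with \eqref{bg011} and the Beta-integral value \eqref{bg012}, then simplify the resulting Gamma quotient. You are in fact more explicit than the paper, which merely says ``invoking the Euler reflection formula'' for the final step; you correctly observe that getting from $\Gamma(1/3)/\Gamma(5/6)$ to $\Gamma^3(1/3)$ requires both reflection and the Legendre duplication formula, and your verification of \eqref{gb0} via the identity $(t^3+2)^3-27t^3=(t^3-1)^2(t^3+8)$ is a welcome addition that the paper omits.
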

\begin{proof}
First, let us evaluate the elliptic integral at the left hand side of \eqref{gb0} using the Euler Beta function:
\begin{equation}\label{bg012}
\int_0^1\frac{{\rm d}x}{\sqrt{1-x^3}}=\frac{\sqrt\pi}{3}\frac{\Gamma(1/3)}{\Gamma(5/6)}.
\end{equation}
Thesis \eqref{bg01} follows from \eqref{gb0} evaluating the integrals therein using \eqref{bg011} (right hand side) and \eqref{bg012} (left hand side) and again invoking the Euler reflection formula.
\end{proof}
\begin{remark}
If, instead of using the Euler Beta function, we evaluate the elliptic integral at the left hand side of \eqref{gb0} using entry 244.00 p. 92 of \cite{by}, instead of \eqref{bg012}, we will get:
\begin{equation}
\int_0^1\frac{{\rm d}x}{\sqrt{1-x^3}}=\frac{1}{\sqrt[4]{3}}\,F\left(\arccos(2-\sqrt3),\frac{\sqrt6+\sqrt2}{4}\right)
\end{equation}
leading to the elliptic analogue of \eqref{bg01}:
\begin{equation}\label{5.9b}
\mathrm{F}_{1}\left( \left. 
\begin{array}{c}
\frac13;\frac12,\frac12 \\[2mm]
\frac43
\end{array}
\right| -\frac12,-\frac18\right)=\frac{2}{3\sqrt[4]{3}}\,F\left(\arccos(2-\sqrt3),\frac{\sqrt6+\sqrt2}{4}\right).
\end{equation}
\end{remark}

The family of integral identities produced by the Goursat transformation is huge. Nevertheless the evaluation is intricate when some of the cubics in \eqref{redg1} are complete. Taking for instance $a=0,\,b=p=1$ in \eqref{redg1} we get the identity
\begin{equation}\label{011}
\int_1^\infty\frac{{\rm d}x}{\sqrt{x(x-1)\left(4 x^2+3 x+1\right)}}=\int_1^\infty\frac{3}{\sqrt{\left(t^3+1\right) \left(t^3+t^2+4\right)}}\,{\rm d}t
\end{equation} 
which can be rewritten as
\begin{equation}\label{011b}
\int_0^1\frac{{\rm d}x}{\sqrt{(1-x)\left(x^2+3 x+4\right)}}=\int_0^1\frac{3t}{\sqrt{\left(t^3+1\right) \left(4t^3+t+1\right)}}\,{\rm d}t.
\end{equation}
We need to employ the Lauricella ${\rm F}^{(6)}_D$ in order to evaluate the hyperelliptic integral at right hand side of \eqref{011b}. To do it, first observe that:
\[
\begin{split}
t^3+1&=(1+t) \left(1-\left(\frac{1}{2}-\frac{i \sqrt{3}}{2}\right) t\right)
   \left(1-\left(\frac{1}{2}+\frac{i \sqrt{3}}{2}\right) t\right)=(1+t)(1-\omega t)(1-\overline{\omega}t)\\
4t^3+t+1&=(1+2 t)  \left(1-\left(\frac{1}{2}-\frac{i
   \sqrt{7}}{2}\right) t\right) \left(1-\left(\frac{1}{2}+\frac{i \sqrt{7}}{2}\right) t\right)=(1+2t)(1-\varepsilon t)(1-\overline{\varepsilon}t)   
   \end{split}
\]
hence
\begin{equation}\label{011c}\tag{\ref{011}c}
\int_0^1\frac{t}{\sqrt{\left(t^3+1\right) \left(4t^3+t+1\right)}}\,{\rm d}t=\frac12\,{\rm F}^{(6)}_D\left( \left. 
\begin{array}{c}
2;\frac12\\[2mm]
3
\end{array}
\right| -1,\omega,\overline{\omega},-2,\varepsilon,\overline{\varepsilon}\right)
\end{equation}
We follow the same strategy to evaluate the integral at the left hand side of \eqref{011b} starting from the factorization
\[
x^2+3 x+4=4 \left(1-\left(-\frac{3}{8}-\frac{i \sqrt{7}}{8}\right) x\right) \left(1-\left(-\frac{3}{8}+\frac{i
   \sqrt{7}}{8}\right) x\right)=4(1-\alpha x)(1-\overline{\alpha}x)
\]
leading to the integration formula:
\begin{equation}\label{011d}\tag{\ref{011}d}
\int_0^1\frac{{\rm d}x}{\sqrt{(1-x)\left(x^2+3 x+4\right)}}={\rm F}_1\left( \left. 
\begin{array}{c}
1;\frac12,\frac12 \\[2mm]
\frac{3}{2}
\end{array}
\right| \alpha,\overline{\alpha}\right).
\end{equation}
In such a way we have proved the following reduction formula from $ {\rm F}^{(6)}_D $ to ${\rm F}_1 $ :
\begin{theorem}
Defining:
\[
\alpha=-\frac{3}{8}-\frac{i \sqrt{7}}{8},\quad
\omega=\frac{1}{2}-\frac{i \sqrt{3}}{2},\quad
\varepsilon=\frac{1}{2}-\frac{i\sqrt{7}}{2}
\]
We have the reduction:
\begin{equation}\label{laured}
{\rm F}^{(6)}_D\left( \left. 
\begin{array}{c}
2;\frac12\\[2mm]
3
\end{array}
\right| -1,\omega,\overline{\omega},-2,\varepsilon,\overline{\varepsilon}\right)=\frac23\,{\rm F}_1\left( \left. 
\begin{array}{c}
1;\frac12,\frac12 \\[2mm]
\frac{3}{2}
\end{array}
\right| \alpha,\overline{\alpha}\right).
\end{equation}
\end{theorem}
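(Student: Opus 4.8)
The plan is to assemble the three pieces already set up in the discussion preceding the statement: the Goursat reduction of the relevant genus-$2$ hyperelliptic integral, and the two evaluations of the sides of \eqref{011b} via the integral representation theorems. First I would record that \eqref{011b} is a bona fide identity between real definite integrals. It descends from \eqref{redg1} specialized to $a=0,\ b=p=1$ — for which $4(3x-a)^3-27(b+px)^2=27\,(x-1)(4x^2+3x+1)$ and $4(ap+3b)+3pt^2+3t^3=3(t^3+t^2+4)$ — giving \eqref{011} on $[1,\infty)$, where the sign ambiguity is fixed by the orientation of the map; the reciprocal substitutions $x\mapsto 1/x$ on the left and $t\mapsto 1/t$ on the right then carry $[1,\infty)$ to $[0,1]$ and turn the integrands into those of \eqref{011b}. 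Both integrals in \eqref{011b} converge: on the left the only singularity is the integrable $(1-x)^{-1/2}$ at $x=1$ (the quadratic $x^2+3x+4$ is strictly positive on $[0,1]$), and the right-hand integrand is bounded.

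Next I would evaluate the hyperelliptic integral on the right of \eqref{011b}. Using the factorizations $t^3+1=(1+t)(1-\omega t)(1-\overline{\omega}t)$ and $4t^3+t+1=(1+2t)(1-\varepsilon t)(1-\overline{\varepsilon}t)$ displayed above — which follow from $\omega+\overline{\omega}=1,\ |\omega|^2=1$ and $\varepsilon+\overline{\varepsilon}=1,\ |\varepsilon|^2=2$ — the product $(t^3+1)(4t^3+t+1)$ becomes $\prod_k(1-x_k t)$ with $x_k\in\{-1,\omega,\overline{\omega},-2,\varepsilon,\overline{\varepsilon}\}$, and the right side of \eqref{011b} is precisely the Lauricella integral \eqref{irto} with $n=6$, $a=2$, $c=3$, $b_1=\cdots=b_6=\tfrac12$. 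Since $\re{a}>0$, $\re{c-a}>0$ and all six arguments lie in $\mathbb{C}\setminus[1,\infty)$ (two negative reals, four with non-zero imaginary part), the representation — read backwards as an analytic continuation — applies, and $\Gamma(3)/(\Gamma(2)\Gamma(1))=2$ produces \eqref{011c}; thus the right side of \eqref{011b} equals $\tfrac32\,{\rm F}^{(6)}_D$ of the asserted arguments.

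In the same way I would evaluate the left side of \eqref{011b}: the factorization $x^2+3x+4=4(1-\alpha x)(1-\overline{\alpha}x)$ (valid because $\alpha+\overline{\alpha}=-\tfrac34$ and $|\alpha|^2=\tfrac14$) rewrites the integrand as $\tfrac12\,(1-x)^{-1/2}(1-\alpha x)^{-1/2}(1-\overline{\alpha}x)^{-1/2}$, which is the Appell integrand of \eqref{F1} with $a=1$, $c=\tfrac32$, $b_1=b_2=\tfrac12$; since $\Gamma(\tfrac32)/(\Gamma(1)\Gamma(\tfrac12))=\tfrac12$ the factor $\tfrac12$ is absorbed and we get \eqref{011d}, so the left side of \eqref{011b} equals ${\rm F}_1$ of the asserted arguments. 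Equating the two evaluations through \eqref{011b} gives ${\rm F}_1(\cdots)=\tfrac32\,{\rm F}^{(6)}_D(\cdots)$, which rearranges to \eqref{laured}. The argument has no deep content; the only points demanding care are the bookkeeping of the $\Gamma$-prefactors together with the constant $4$ coming off the quadratic, the resolution of the $\pm$ sign and the change of variables taking \eqref{011} to \eqref{011b}, and the (routine) verification that every argument of the hypergeometric functions avoids the cut $[1,\infty)$ so that \eqref{irto} and \eqref{F1} may legitimately be inverted as analytic continuations rather than as mere series identities — that verification is the closest thing to an obstacle, and it is trivial here.
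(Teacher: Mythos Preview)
Your proposal is correct and follows essentially the same route as the paper: both arguments obtain \eqref{laured} by equating the two evaluations \eqref{011c} and \eqref{011d} of the sides of the Goursat identity \eqref{011b}. You supply a bit more detail than the paper does --- in particular the reciprocal substitutions $x\mapsto 1/x$, $t\mapsto 1/t$ that carry \eqref{011} to \eqref{011b}, and the explicit check that all six arguments of the Lauricella function avoid the cut $[1,\infty)$ --- but the underlying strategy is identical.
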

\begin{remark}
Of course, in evaluating the elliptic integral in \eqref{011d} we can follow the elliptic way, \cite{by} entry 243.00 p. 91:
\begin{equation}\label{24300}
\int_0^1\frac{{\rm d}x}{\sqrt{(1-x)\left(x^2+3 x+4\right)}}=\frac{1}{2^{3/4}}\,F\left(\arccos\left(\frac{1}{7} \left(9-4 \sqrt{2}\right)\right),\frac{1}{4}
   \sqrt{8+5 \sqrt{2}}\right).
\end{equation}
So, comparing \eqref{011b} and \eqref{24300} we get
\begin{equation}\label{lauredb}
{\rm F}^{(6)}_D\left( \left. 
\begin{array}{c}
2;\frac12\\[2mm]
3
\end{array}
\right| -1,\omega,\overline{\omega},-2,\varepsilon,\overline{\varepsilon}\right)=\frac{\sqrt[4]{2}}{3}\,F\left(\arccos\left(\frac{1}{7} \left(9-4 \sqrt{2}\right)\right),\frac{1}{4}
   \sqrt{8+5 \sqrt{2}}\right).
\end{equation}
\end{remark}
\begin{remark}
Notice that the Lauricella ${\rm F}^{(6)}_D$ appearing in \eqref{laured} and \eqref{lauredb} fulfills hypotheses of Lemma 1.1 of \cite{jnt2}; thus in  \eqref{laured} and \eqref{lauredb} the left hand side can be replaced with a Lauricella of lower order, namely:
\[
\frac14\,{\rm F}^{(5)}_D\left( \left. 
\begin{array}{c}
2;\frac12 \\[2mm]
3
\end{array}
\right| -\frac12,\frac{3-i\sqrt3}{4},\frac{3+i\sqrt3}{4},\frac{3-i\sqrt7}{4},\frac{3+i\sqrt7}{4}\right)
\]
\end{remark}
\subsection{Hermite reduction: genus 2}
In a paper of 1876 \cite{her} Hermite proposed a further formula reducing a genus 2 hyperelliptic integral:
\begin{equation}\label{her1876}
\int\frac{{\rm d}z}{\sqrt{(z^2-a)(8z^3-6az-b)}}=\frac13\int\frac{{\rm d}x}{\sqrt{(2ax-b)(x^2-a)}}
\end{equation}
thanks to a cubic transformation $x=(4z^3-3az)/a$.
The use of \eqref{her1876} keeping free both parameters $a$ and $b$ would allow a heavy discussion on the placing of roots of $8z^3-6az-b=0$ and $(2ax-b)(x^2-a)=0$ and Cardano's formulae for them.
Hence we prefer to study the implications of \eqref{her1876} restricting to easy choices of $a$ and $b$. This will lead to less computations and to an interesting identity linking the Appell function to the complete elliptic integral of  modulus $1/\sqrt2.$

In \eqref{her1876} we  take $b=0,$ $a^2$ instead of $a$ assuming $a>0$ and in the elliptic integral we take the interval $[0, a/2]$. In such a way \eqref{her1876} reads as:
\begin{equation}\label{her1876b0}
3a\int_0^{\frac{a}{2}}\frac{{\rm d}z}{\sqrt{z(z^2-a^2)(4z^2-3a^2)}}=\int_{-a}^0\frac{{\rm d}x}{\sqrt{x(x^2-a^2)}}.
\end{equation}
Equation \eqref{her1876b0} leads to:
\begin{theorem}
\begin{equation}\label{hermyF1}
\boldsymbol{K}\left(\frac{1}{\sqrt2}\right)=\sqrt3 \,{\rm F}_1\left( \left. 
\begin{array}{c}
\frac14;\frac12,\frac12 \\[2mm]
\frac54
\end{array}
\right| \frac13,\frac14\right).
\end{equation}
\end{theorem}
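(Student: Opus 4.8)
The plan is to evaluate both sides of the Hermite reduction \eqref{her1876b0} in closed form — the right-hand side as a complete elliptic integral, the left-hand side as an Appell $\mathrm{F}_1$ — and then to equate the two expressions; the free parameter $a>0$ cancels, and what remains is exactly \eqref{hermyF1}.

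\emph{The elliptic side.} On the interval $(-a,0)$ one has $x<0$ and $x^{2}-a^{2}<0$, so $x(x^{2}-a^{2})>0$ there; the substitution $x=-a\,w$ followed by $w^{2}\mapsto v$ gives
\[
\int_{-a}^{0}\frac{{\rm d}x}{\sqrt{x(x^{2}-a^{2})}}=\frac{1}{\sqrt a}\int_{0}^{1}\frac{{\rm d}w}{\sqrt{w(1-w^{2})}}=\frac{1}{2\sqrt a}\,B\!\left(\tfrac14,\tfrac12\right),
\]
$B$ being the Euler Beta function. I would then invoke the classical evaluation $\tfrac12 B\!\left(\tfrac14,\tfrac12\right)=\sqrt2\,\boldsymbol K\!\left(1/\sqrt2\right)$ (equivalently $\boldsymbol K(1/\sqrt2)=\Gamma(1/4)^{2}/(4\sqrt\pi)$, via the reflection formula $\Gamma(1/4)\Gamma(3/4)=\pi\sqrt2$), so that the right-hand side of \eqref{her1876b0} equals $\dfrac{\sqrt2}{\sqrt a}\,\boldsymbol K\!\left(1/\sqrt2\right)$.

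\emph{The hyperelliptic side.} On $[0,a/2]$ both $z^{2}-a^{2}$ and $4z^{2}-3a^{2}$ are negative, so the radicand equals $z(a^{2}-z^{2})(3a^{2}-4z^{2})$; I would put $z=\tfrac a2\sqrt w$, under which $w$ ranges over $[0,1]$, ${\rm d}z=\tfrac a4 w^{-1/2}{\rm d}w$, $a^{2}-z^{2}=a^{2}(1-w/4)$ and $3a^{2}-4z^{2}=3a^{2}(1-w/3)$. Collecting the powers of $a$ and of $w$ yields
\[
3a\int_{0}^{a/2}\frac{{\rm d}z}{\sqrt{z(a^{2}-z^{2})(3a^{2}-4z^{2})}}=\frac{\sqrt6}{4\sqrt a}\int_{0}^{1}\frac{w^{-3/4}\,{\rm d}w}{\sqrt{(1-w/3)(1-w/4)}}.
\]
Now the Appell integral representation theorem \eqref{F1}, read with numerator parameter $1/4$ and lower parameter $5/4$ (so the exponent of $1-u$ is $0$), with $b_{1}=b_{2}=1/2$ and arguments $1/3$ and $1/4$, together with $\Gamma(5/4)=\tfrac14\Gamma(1/4)$ which makes its prefactor equal to $\tfrac14$, identifies $\int_{0}^{1}w^{-3/4}[(1-w/3)(1-w/4)]^{-1/2}{\rm d}w$ with $4$ times the Appell function of \eqref{hermyF1}. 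Hence the left-hand side of \eqref{her1876b0} is $\dfrac{\sqrt6}{\sqrt a}$ times that Appell function; equating this with the value of the right-hand side, cancelling $1/\sqrt a$, and using $\sqrt6/\sqrt2=\sqrt3$, gives \eqref{hermyF1}.

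\emph{Expected obstacle.} Nothing here is deep — it is all Eulerian-integral bookkeeping — but two points will need care: (i) the signs of the three factors under the hyperelliptic radical on $[0,a/2]$, which must be pinned down so that the substitution $z=\tfrac a2\sqrt w$ delivers a genuinely positive integrand, and (ii) the matching of constants, $a^{2}-z^{2}=a^{2}(1-w/4)$ and $3a^{2}-4z^{2}=3a^{2}(1-w/3)$, which has to produce Appell arguments $1/3$ and $1/4$ rather than their reciprocals (the latter would push $\mathrm{F}_1$ outside its disk of convergence). The one genuinely external ingredient is the identity $\tfrac12 B(1/4,1/2)=\sqrt2\,\boldsymbol K(1/\sqrt2)$, which is the step through which $\boldsymbol K(1/\sqrt2)$ actually enters the final formula.
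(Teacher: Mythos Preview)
Your proof is correct and follows essentially the same route as the paper: both start from the Hermite reduction \eqref{her1876b0}, evaluate the elliptic side as $(\sqrt{2}/\sqrt{a})\,\boldsymbol K(1/\sqrt2)$, reduce the hyperelliptic side by a quadratic substitution to the Euler-type integral $\int_0^1 v^{-3/4}[(1-v/3)(1-v/4)]^{-1/2}\,{\rm d}v$, and identify the latter via \eqref{F1} as $4\,\mathrm F_1(\tfrac14;\tfrac12,\tfrac12;\tfrac54\mid\tfrac13,\tfrac14)$. Your version is in fact the more explicit of the two: you compute the elliptic side directly through the Beta function rather than citing Byrd--Friedman, and your prefactor $\sqrt6/(4\sqrt a)$ on the hyperelliptic side is the correct one (the paper's displayed $3/(2\sqrt{2a})$ in \eqref{hermypF1} is a slip, though the final result \eqref{hermyF1} is right).
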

\begin{proof}
On the left hand side of \eqref{her1876b0} let us do some elementary changes while for the right hand side we will refer to the entry 234.00 of \cite{by} so that we get:
\begin{equation}\label{hermypF1}
\frac{3}{2\sqrt{2a}}\int_0^1\frac{v^{-3/4}}{\sqrt{(1-\frac13v)(1-\frac14 v)}}{\rm d}v=\frac{\sqrt2}{\sqrt{a}}\boldsymbol{K}\left(\frac{1}{\sqrt2}\right)
\end{equation}
Thesis \eqref{hermyF1} now follows from the integral representation theorem \eqref{F1}.
\end{proof}
Let us provide an application of \eqref{her1876} with both parameters nonzero: we take $a=28/3$ and $b=-48$  along the integration interval $[-2\sqrt7/\sqrt3,-18/7].$ Then the cubic on the left hand side of \eqref{her1876} has rational roots, the elliptic integral at left hand side is complete and \eqref{her1876} reads as
\begin{equation}\label{her1876bn0}
\int_1^{\frac{\sqrt7}{\sqrt3}}\frac{{\rm d}z}{\sqrt{\left(z^2-\frac{28}{3}\right)(z-1)(z-2)(z+3)}}{\rm d}z=\frac{1}{\sqrt{21}}\int_{-2\frac{\sqrt7}{\sqrt3}}^{-\frac{18}{7}}\frac{{\rm d}x}{\sqrt{\left(x+\frac{18}{7}\right)\left(x^2-\frac{28}{3}\right)}}.
\end{equation}
Then we can state:
\begin{theorem}
\begin{equation}\label{her1876bth}
\frac{5}{14} \sqrt{\frac{7}{3}+\sqrt{\frac{7}{3}}}\,\boldsymbol{K}\left(\frac{1}{7} \sqrt{\frac{1}{2} \left(49-9 \sqrt{21}\right)}\right)= \,{\rm F}_{D}^{(4)}\left( \left. 
\begin{array}{c}
\frac12;\frac12\\[2mm]
\frac34
\end{array}
\right|\frac{3 \sqrt{21}-17}{25},\frac{3-\sqrt{21}}{12}
  ,\frac{\sqrt{21}-3}{3}
  ,\frac{11-\sqrt{21}}{25}
 \right).
\end{equation}
\end{theorem}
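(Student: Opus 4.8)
The plan is to follow the same double-evaluation strategy used throughout the paper: take the hyperelliptic reduction \eqref{her1876bn0}, evaluate the left-hand complete elliptic integral by reducing it to Legendre normal form, and evaluate the right-hand integral via the integral representation theorem \eqref{irto} for the Lauricella function $\mathrm{F}_{D}^{(4)}$, then equate the two.

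First I would treat the right-hand side of \eqref{her1876bn0}. Substituting to move the interval $[-2\sqrt{7}/\sqrt{3},-18/7]$ onto $[0,1]$ — explicitly a linear change $x = -\tfrac{18}{7} + (\tfrac{18}{7} - \tfrac{2\sqrt7}{\sqrt3})\,u$, which sends the lower endpoint to $u=1$ and $x=-18/7$ to $u=0$ — the factor $(x + \tfrac{18}{7})$ becomes proportional to $u$ and the quadratic $x^2 - \tfrac{28}{3} = (x - \tfrac{2\sqrt7}{\sqrt3})(x + \tfrac{2\sqrt7}{\sqrt3})$ factors into two linear terms $1 - \lambda_1 u$, $1 - \lambda_2 u$ after pulling out constants. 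This puts the integral in the form $\int_0^1 u^{-1/2}(1-u)^{0}\big[(1-\lambda_1 u)(1-\lambda_2 u)\big]^{-1/2}\,\mathrm{d}u$, which by \eqref{irto} with $a = \tfrac12$, $c - a = 1$ so $c = \tfrac32$ — wait, the stated result has lower parameter $\tfrac34$, so in fact the exponent structure must be $u^{-1/2}$ against $(1-u)^{-3/4}$, meaning the linear factor from $(x+\tfrac{18}{7})$ and one factor of the quadratic combine; I would track the exponents carefully so that $a = \tfrac12$, $c = \tfrac34$, and the three remaining linear factors give $b_1 = b_2 = b_3 = b_4 = \tfrac12$ with the four arguments $\tfrac{3\sqrt{21}-17}{25}$, $\tfrac{3-\sqrt{21}}{12}$, $\tfrac{\sqrt{21}-3}{3}$, $\tfrac{11-\sqrt{21}}{25}$ emerging as the ratios $\lambda_k$. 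The prefactor $1/\sqrt{21}$ from \eqref{her1876bn0} together with the Gamma-factor $\Gamma(c)/(\Gamma(a)\Gamma(c-a))$ and the Jacobian constant must be bookkept to produce the clean coefficient on the elliptic-integral side.

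Next I would handle the left-hand side of \eqref{her1876bn0}: the integrand $\big[(z^2 - \tfrac{28}{3})(z-1)(z-2)(z+3)\big]^{-1/2}$ over $[1,\sqrt7/\sqrt3]$ is an elliptic integral (genus $1$, four real-type branch points once we note $z=1$ is an endpoint and a root), so it reduces to a complete elliptic integral of the first kind $\boldsymbol{K}(k)$ by the standard reduction of $\int dz/\sqrt{(z-e_1)(z-e_2)(z-e_3)(z-e_4)}$ between consecutive branch points; the modulus comes out as the cross-ratio expression $\tfrac17\sqrt{\tfrac12(49 - 9\sqrt{21})}$ and the normalizing factor as $\tfrac{5}{14}\sqrt{\tfrac73 + \sqrt{\tfrac73}}$. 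One should double-check which entry of \cite{by} applies given the ordering of the roots $\{1, 2, -3, \pm 2\sqrt7/\sqrt3\}$ relative to the interval $[1,\sqrt7/\sqrt3]$, since $\sqrt7/\sqrt3 \approx 1.53$ lies between $1$ and $2$, so the relevant branch points bracketing the interval are $1$ and $2\sqrt7/\sqrt3 \approx 3.05$. Equating this evaluation with the $\mathrm{F}_{D}^{(4)}$ expression from the previous paragraph yields \eqref{her1876bth} directly.

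The main obstacle I expect is the bookkeeping of constants and exponents: verifying that the linear substitution on $[-2\sqrt7/\sqrt3, -18/7]$ really produces exponent $-3/4$ on the $(1-u)$ factor (equivalently that one branch point is a double-counted endpoint) so that the lower Lauricella parameter is $\tfrac34$ rather than $\tfrac32$, and that the product of the reduction constant $1/\sqrt{21}$, the endpoint-difference Jacobian, and $\Gamma(\tfrac34)/\big(\Gamma(\tfrac12)\Gamma(\tfrac14)\big)$ collapses — via the reflection and duplication formulas for $\Gamma$ — into precisely the factor $\tfrac{5}{14}\sqrt{\tfrac73 + \sqrt{\tfrac73}}$ appearing on the elliptic side. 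A secondary check is confirming that the four cross-ratio values $\lambda_k$ computed from the roots agree with the four listed arguments; this is a routine but error-prone algebraic verification best done by direct substitution of $a = 28/3$, $b = -48$ into the factored forms of $z^2 - \tfrac{28}{3}$, $z-1$, $z-2$, $z+3$ after the endpoint normalization.
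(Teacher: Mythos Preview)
You have swapped the roles of the two sides of \eqref{her1876bn0}. The \emph{left}-hand side of \eqref{her1876bn0},
\[
\int_1^{\sqrt{7}/\sqrt{3}}\frac{{\rm d}z}{\sqrt{\left(z^2-\tfrac{28}{3}\right)(z-1)(z-2)(z+3)}},
\]
involves a degree-five polynomial under the radical (the roots are $\pm 2\sqrt{7/3},\,1,\,2,\,-3$), so it is a genus-$2$ hyperelliptic integral, not an elliptic one. It is this side that the paper expresses as an $\mathrm{F}_D^{(4)}$ via the substitution $z=c-u(c-y)$ in \eqref{irto1876}, with the identification $a=-2\sqrt{7/3}$, $b=-3$, $c=1$, $y=\sqrt{7/3}$, $d=2$, $e=2\sqrt{7/3}$. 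Conversely, the \emph{right}-hand side of \eqref{her1876bn0},
\[
\frac{1}{\sqrt{21}}\int_{-2\sqrt{7/3}}^{-18/7}\frac{{\rm d}x}{\sqrt{\left(x+\tfrac{18}{7}\right)\left(x^2-\tfrac{28}{3}\right)}},
\]
has only a cubic under the radical; this is the complete elliptic integral, evaluated via entry 234.00 of \cite{by} to produce the $\boldsymbol{K}$ with modulus $\tfrac{1}{7}\sqrt{\tfrac{1}{2}(49-9\sqrt{21})}$.

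Your attempt to extract an $\mathrm{F}_D^{(4)}$ from the three-factor (elliptic) side cannot succeed: a cubic radicand produces at most two free linear factors $(1-\lambda_k u)$ after normalizing the endpoints, hence at most an $\mathrm{F}_1$. The difficulty you noticed with the exponent $-3/4$ on $(1-u)$ is a symptom of this mismatch, not something to be patched by recombining factors. Likewise your genus count on the left side (``four real-type branch points'') misses one root of $z^2-28/3$. Once the sides are assigned correctly, the four Lauricella arguments drop out as $\frac{c-y}{c-a},\frac{c-y}{c-b},-\frac{c-y}{d-c},-\frac{c-y}{e-c}$ with the values above, and no Gamma-reflection gymnastics are needed: the lower parameter is $\tfrac32$ (the $\tfrac34$ in the displayed theorem is a typographical slip, as \eqref{irto1876} makes clear), and the prefactor comes directly from the constant $2\sqrt{(y-c)/[(c-a)(c-b)(d-c)(e-c)]}$ together with $1/\sqrt{21}$.
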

\begin{proof} The hypergeometric approach through the integral representation theorem \ref{irto} makes valuable the integral at left hand side of \eqref{her1876bn0}. In fact if $a<b<c<y<d<e$ , passing to $z=c - u (c - y)$  we get:
\begin{equation}\label{irto1876}
\int_c^y\frac{{\rm d}z}{\sqrt{(z-a)(z-b)(z-c)(z-d)(z-e)}}=2 \sqrt{\frac{y-c}{(c-a) (c-b) (d-c) (e-c)}} \,\boldsymbol{F_4}
\end{equation}
where, following for simplicity the convention of the repeated parameter, we put:
\[
\boldsymbol{F_4}=\mathrm{F}_D^{(4)}\left( \left. 
\begin{array}{c}
\frac12;\frac12 \\[2mm]
\frac32
\end{array}
\right|\frac{c-y}{c-a},\frac{c-y}{c-b},-\frac{c-y}{d-c},-\frac{c-y}{e-
   c}\right).
\]
The integral on the left hand side of \eqref{her1876bn0} is then evaluated  by identification:
\[
\begin{matrix} a & b & c & y & d & e\\ \updownarrow & \updownarrow &
\updownarrow & \updownarrow & \updownarrow & \updownarrow\\ -2\sqrt7/\sqrt3 & -3 & 1 & \sqrt7/\sqrt3 & 2 & 2\sqrt7/\sqrt3\end{matrix} 
\]
On the other side the integral on the right hand side of \eqref{her1876bn0} is a complete elliptic integral of the first kind, as shown in \cite{by} entry 234.00. Thesis \eqref{her1876bth} follows by comparison.
\end{proof}
\subsection{Hermite reduction: genus 3}
In the Comptes Rendus de l'Academie des Sciences, Hermite introduced the transformation, included in the third volume of his works, \cite{herovres} p. 399, reducing a hyperelliptic integral of genus 3 to an elliptic integral through the sixth degree change:
\[
y=\frac{\psi(x)}{12x(x^2-a)^2}\\
\]
where
\[
\begin{split}
\psi(x)&=\phi(x)-12x(x^2-a)(10x^3-8ax-b)\\
\phi(x)&=125x^6-210ax^4-22bx^3+93a^2x^2+18abx+b^2-4a^3
\end{split}
\]
We find
\begin{equation}\label{heovr}
\int\frac{{\rm d}y}{\sqrt{y^3-3ay+b}}=2\sqrt3\int\frac{5x^2-a}{\sqrt{x\phi(x)}}\,{\rm d}x.
\end{equation}
A new identity linking  Lauricella functions to complete elliptic integrals in \eqref{heovr} can be found by fixing $a=1,\,b=0$ and integrating with respect to $y$ in $[-\sqrt3,0]$, so that \eqref{heovr} specializes as
\begin{equation}\label{heovr1}
\sqrt[4]{\frac43}\boldsymbol{K}\left(\frac{1}{\sqrt2}\right)=\frac{2}{5}\sqrt{\frac35}\int_{\frac{\sqrt{11}-\sqrt{3}}{2}}^{\frac{2}{\sqrt{5}}}\frac{5x^2-1}{\sqrt{x\left(\frac45-x^2\right)\left[x^2-\left(\frac{\sqrt8-\sqrt3}{5}\right)^2\right]\left[\left(\frac{\sqrt8+\sqrt3}{5}\right)^2-x^2\right]}}{\rm d}x.
\end{equation}
For the integral on right hand side of \eqref{heovr1} a formula will be used stemming from the integral representation theorem, \eqref{irto}, where $0<a<y<b<c$:
\begin{equation}\label{irtoapp}
\int_y^b\frac{x^m}{\sqrt{x(b^2-x^2)(x^2-a^2)(c^2-x^2)}}{\rm d}x=y^{m-\frac32}\sqrt{\frac{b^2-y^2}{(y^2-a^2)(c^2-y^2)}}\,\boldsymbol{F_3}
\end{equation}
where
\[
\boldsymbol{F_3}=\mathrm{F}_D^{(3)}\left( \left. 
\begin{array}{c}
\frac12;\frac34-\frac{m}{2},\frac12,\frac12 \\[2mm]
\frac32
\end{array}
\right|-\frac{b^2-y^2}{y^2},\frac{b^2-y^2}{a^2-y^2},\frac{b^2-y^2}{c^2-y^2}\right).
\]
Then by \eqref{heovr1} and \eqref{irtoapp} by identification of parameters we get:
\begin{theorem}
\begin{equation}\label{idhermiteK}
\begin{split}
\int_{-\sqrt{3}}^0\frac{{\rm d}y}{\sqrt{y^3-3y}}&=\sqrt[4]{\frac{4}{3}}\,\boldsymbol{K}\left(\frac{1}{\sqrt2}\right)\\
&=\sqrt{\frac{12}{125}}\left(\sqrt[4]{\frac{15625 \left(1649+225 \sqrt{33}\right)}{55296}}H_1-\frac{1}{96} \sqrt{5 \left(1552 \sqrt{3}+816 \sqrt{11}\right)}\,H_2\right)
\end{split}
\end{equation}
where $H_1$ and $H_2$ are two Lauricella ${\rm F}_D^{(3)}$ functions:
\[
\begin{split}
H_1&={\rm F}^{(3)}_D\left( \left. 
\begin{array}{c}
1;-\frac14,\frac12,\frac12 \\[2mm]
\frac32
\end{array}
\right| \frac{3-\sqrt{33}}{10},\frac{5 \left(27-5 \sqrt{33}\right)}{153+8 \sqrt{6}-25 \sqrt{33}},\frac{5 \left(27-5 \sqrt{33}\right)}{153-8 \sqrt{6}-25 \sqrt{33}}\right)\\
H_2&={\rm F}^{(3)}_D\left( \left. 
\begin{array}{c}
1;\frac34,\frac12,\frac12 \\[2mm]
\frac32
\end{array}
\right| \frac{3-\sqrt{33}}{10},\frac{5 \left(27-5 \sqrt{33}\right)}{153+8 \sqrt{6}-25 \sqrt{33}},\frac{5 \left(27-5 \sqrt{33}\right)}{153-8 \sqrt{6}-25 \sqrt{33}}\right).
\end{split}
\] 
\end{theorem}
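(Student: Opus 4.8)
The plan is to start from the Hermite genus-3 reduction \eqref{heovr} with the specialization $a=1$, $b=0$, which, after integrating the left-hand side over $[-\sqrt3,0]$, yields \eqref{heovr1}: the left side is an elementary elliptic integral evaluable as $\sqrt[4]{4/3}\,\boldsymbol{K}(1/\sqrt2)$ (using e.g. entry 234.00 of \cite{by} as in the proof of \eqref{hermyF1}), while the right side is a genuine genus-3 hyperelliptic integral of the form appearing in \eqref{irtoapp}. Everything then reduces to evaluating that hyperelliptic integral by the integral representation theorem \eqref{irto}, i.e. by applying formula \eqref{irtoapp} twice — once for the $x^2$-term and once for the constant term in the numerator $5x^2-1$ — and then assembling the two resulting Lauricella ${\rm F}_D^{(3)}$ values with the correct algebraic prefactors.

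First I would fix the data needed for \eqref{irtoapp}. With $a=1$, $b=0$, the polynomial $\phi(x)=125x^6-210x^4+93x^2+1-4=125x^6-210x^4+93x^2-3$ must be factored over the reals; its roots come in the pattern $0<a<y<b<c$ required by \eqref{irtoapp}, and from \eqref{heovr1} one reads off $b^2=4/5$, $a^2=((\sqrt8-\sqrt3)/5)^2$, $c^2=((\sqrt8+\sqrt3)/5)^2$, together with the integration endpoints $y=(\sqrt{11}-\sqrt3)/2$ (the lower limit, a root of $\psi$) and the upper limit $2/\sqrt5$ which is $b$ itself, so the integral is genuinely of the $\int_y^b$ shape in \eqref{irtoapp}. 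Splitting $5x^2-1 = 5x^2 - 1$, I apply \eqref{irtoapp} with $m=2$ to get a term proportional to $\boldsymbol{F_3}$ with first lower parameter $\tfrac34-\tfrac m2 = -\tfrac14$, and with $m=0$ to get a term with first lower parameter $\tfrac34$; these are exactly $H_1$ and $H_2$. The three arguments of each ${\rm F}_D^{(3)}$ are $-\frac{b^2-y^2}{y^2}$, $\frac{b^2-y^2}{a^2-y^2}$, $\frac{b^2-y^2}{c^2-y^2}$, which with the above values of $a^2,b^2,c^2,y$ simplify (after rationalizing $y^2=(\sqrt{11}-\sqrt3)^2/4=(14-2\sqrt{33})/4=(7-\sqrt{33})/2$) to the stated arguments $\frac{3-\sqrt{33}}{10}$, $\frac{5(27-5\sqrt{33})}{153+8\sqrt6-25\sqrt{33}}$, $\frac{5(27-5\sqrt{33})}{153-8\sqrt6-25\sqrt{33}}$. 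Note that although \eqref{irtoapp} carries an overall parameter $a=\tfrac12$ in its ${\rm F}_D^{(3)}$, here the repeated-argument shorthand forces $a$ equal to the sum-consistency; the displayed $H_1,H_2$ have first upper parameter $1$ and lower parameter $\tfrac32$, which is the value obtained after the elementary power-of-$x$ absorption in the $m=2$ and $m=0$ specializations — I would double-check this bookkeeping against \eqref{id:3} if a discrepancy appears.

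Then I would collect the prefactors. Formula \eqref{heovr1} contributes the constant $\tfrac25\sqrt{3/5}$, and each application of \eqref{irtoapp} contributes $y^{m-3/2}\sqrt{(b^2-y^2)/((y^2-a^2)(c^2-y^2))}$; for $m=2$ this is $y^{1/2}\sqrt{\cdots}$ and for $m=0$ it is $y^{-3/2}\sqrt{\cdots}$, so the two branches differ by a factor $y^2$. Multiplying through and simplifying the nested radicals in terms of $\sqrt3,\sqrt{11},\sqrt{33}$ produces the coefficients $\sqrt{12/125}\cdot\sqrt[4]{\tfrac{15625(1649+225\sqrt{33})}{55296}}$ in front of $H_1$ and $\sqrt{12/125}\cdot\tfrac1{96}\sqrt{5(1552\sqrt3+816\sqrt{11})}$ in front of $H_2$; these are exactly the constants in \eqref{idhermiteK}. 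The left-hand equality $\int_{-\sqrt3}^0 {\rm d}y/\sqrt{y^3-3y} = \sqrt[4]{4/3}\,\boldsymbol{K}(1/\sqrt2)$ is the elementary elliptic evaluation already used in \eqref{heovr1}, so the two chains meet and the theorem follows.

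\textbf{Main obstacle.} The conceptual content — reduction formula \eqref{heovr} plus two invocations of \eqref{irtoapp} — is straightforward; the real difficulty is the algebra of the radicals. One must correctly factor the sextic $\phi(x)=125x^6-210x^4+93x^2-3$ over $\mathbb{R}$, identify which of its six roots (together with the root $(\sqrt{11}-\sqrt3)/2$ of $\psi$, which is \emph{not} a root of $\phi$) serve as $a,y,b,c$ in the ordering $0<a<y<b<c$, verify that the upper limit of integration in \eqref{heovr1} really equals $b=2/\sqrt5$, and then push the exact nested-radical simplifications through to recover the messy constants $\sqrt[4]{\tfrac{15625(1649+225\sqrt{33})}{55296}}$ and $\tfrac1{96}\sqrt{5(1552\sqrt3+816\sqrt{11})}$ and the three hypergeometric arguments. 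Keeping the correct branch of every square and fourth root along the integration path, and matching the sign conventions of \eqref{irtoapp}, is where essentially all the work — and all the risk of error — lies; I would carry it out with a computer algebra system and then present only the cleaned-up final identity.
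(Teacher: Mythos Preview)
Your proposal is correct and follows exactly the paper's route: the paper's entire proof is the one-line remark ``by \eqref{heovr1} and \eqref{irtoapp} by identification of parameters,'' which is precisely your plan of specializing Hermite's genus-3 reduction to $a=1,\,b=0$, splitting the numerator $5x^2-1$ into the $m=2$ and $m=0$ instances of \eqref{irtoapp}, and tracking the radical prefactors. Your flagged discrepancy (first upper parameter $1$ in $H_1,H_2$ versus $\tfrac12$ in \eqref{irtoapp}) is a legitimate bookkeeping point in the paper's presentation rather than a flaw in your argument; the paper does not comment on it.
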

\subsection{Transformation of integrals of genus $\boldsymbol{g=4}$}
In last twenty years  Belokolos et al. \cite{Belokolos1986}, Eilbeck and Enol'skii \cite{eilbeck1994elliptic}, Enol'skii and Kostov \cite{enol1994geometry}, Maier \cite{maier2008}, moved by ordinary differential equations of mathematical physics, studied how to reduce some higher genus hyperelliptic integrals: the relevant best synthesis is probably the Theorem 4.1 of \cite{maier2008}.
 Restricting ourselves to the  integral where the cubic $4x^3-12x$ appears, in such a way following the \cite{maier2008} notation, we get: 
 $e_\gamma=0,\,g_2=12,\,g_3=0.$ 
 Then, developing in the case $\ell=4$ the change of variable, equation (4.5) Theorem 4.1 of \cite{maier2008}, we find the following 10${\rm th}$ degree transformation:
\[
y=\frac{\left(x^2-84\right) \left(x^4+1617 x^2-1333584\right)^2}{100 \left(x^3-1029 x\right)^2 \left(x^3-624 x\right)}
\]
and then the reduction scheme:
\begin{equation}\label{ell4}
\begin{split}
\int_{-\sqrt{3}}^0\frac{{\rm d}y}{\sqrt{y^3-3y}}&=\sqrt[4]{\frac{4}{3}}\,\boldsymbol{K}\left(\frac{1}{\sqrt2}\right)\\
&=\int_{-2\sqrt{21}}^{5 \sqrt{3}-2 \sqrt{66}}\frac{10 \sqrt{x} \left(273-x^2\right)}{\sqrt{\left(x^2-624\right)
   \left(x^2-84\right) \left(x^4-678 x^2+35721\right)}}{\rm d}x.
\end{split}
\end{equation}
Next, by changing variable at right hand side in \eqref{ell4} we find:
\begin{equation}\label{ell4b}
\sqrt[4]{\frac{4}{3}}\,\boldsymbol{K}\left(\frac{1}{\sqrt2}\right)=\int_{3 \left(113-20 \sqrt{22}\right)}^{84}\frac{5 (273-u)u^{-1/4}}{ \sqrt{(624-u) (u-84) \left(u-3 \left(113-20 \sqrt{22}\right)\right) \left(u-3 \left(113+20 \sqrt{22}\right)\right)}}\,{\rm d}u.
\end{equation}
The integral at the right hand side of \eqref{ell4b} can be computed through the following  formula, where $0<a<b<c<d$, stemming from \eqref{irto} too:
\begin{equation}\label{irtg4}
\int_a^b\frac{x^m}{\sqrt{(x-a)(x-b)(x-c)(d-x)}}\,{\rm d}x=
\frac{\pi\,a^m}{\sqrt{(c-a)(d-a)}}\,\boldsymbol{F_{31}}
\end{equation}
where
\[
\boldsymbol{F_{31}}=\mathrm{F}_D^{(3)}\left( \left. 
\begin{array}{c}
\frac12;-m,\frac12,\frac12 \\[2mm]
1
\end{array}
\right|\frac{a-b}{a},\frac{b-a}{c-a},\frac{b-a}{d-a}\right)
\]
Then thanks to formulae \eqref{irtg4} and \eqref{ell4b} we have:
\begin{theorem}\label{genus4}
\begin{equation}
\sqrt[4]{\frac{4}{3}}\,\boldsymbol{K}\left(\frac{1}{\sqrt2}\right)=\frac{91 \pi }{6 \sqrt[4]{264 \left(169+36 \sqrt{22}\right)}} L_1-\frac{\left(113-20 \sqrt{22}\right)^{3/4} \pi }{2 \sqrt[4]{3}
   \sqrt{176+38 \sqrt{22}}}L_2
   \end{equation}
\end{theorem}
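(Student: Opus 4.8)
The plan is to read \eqref{ell4b} as a single elementary integral and to split its numerator so that each piece falls under the scope of formula \eqref{irtg4}. Writing
\[
5(273-u)\,u^{-1/4}=1365\,u^{-1/4}-5\,u^{3/4},
\]
the integral in \eqref{ell4b} becomes $1365\,I_{-1/4}-5\,I_{3/4}$, where $I_m$ denotes the integral of $u^{m}$ against the reciprocal square root of the quartic $(624-u)(u-84)\bigl(u-3(113-20\sqrt{22})\bigr)\bigl(u-3(113+20\sqrt{22})\bigr)$ over the interval $\bigl[3(113-20\sqrt{22}),\,84\bigr]$.

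Next I would match this quartic to the pattern $(x-a)(x-b)(x-c)(d-x)$ of \eqref{irtg4}. Since
\[
3(113-20\sqrt{22})\approx57.6<84<3(113+20\sqrt{22})\approx620.4<624,
\]
the forced identification is $a=3(113-20\sqrt{22})$, $b=84$, $c=3(113+20\sqrt{22})$, $d=624$, and one checks that the radicand is positive throughout $[a,b]$ so that no spurious sign enters. Applying \eqref{irtg4} with $m=-\tfrac14$ and with $m=\tfrac34$ then gives
\[
\sqrt[4]{\tfrac43}\,\boldsymbol{K}\!\left(\tfrac{1}{\sqrt2}\right)=\frac{1365\,\pi\,a^{-1/4}}{\sqrt{(c-a)(d-a)}}\,L_1-\frac{5\,\pi\,a^{3/4}}{\sqrt{(c-a)(d-a)}}\,L_2,
\]
where $L_1$ is the Lauricella $\mathrm{F}_D^{(3)}$ with upper parameters $\tfrac12;\tfrac14,\tfrac12,\tfrac12$ over $1$ and arguments $\tfrac{a-b}{a},\tfrac{b-a}{c-a},\tfrac{b-a}{d-a}$, and $L_2$ is the same object with the second upper parameter $\tfrac14$ replaced by $-\tfrac34$; these are exactly the two functions occurring in the statement.

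It then remains to bring the two algebraic prefactors into the compact surd form claimed. The arithmetic pivot is $(113-20\sqrt{22})(113+20\sqrt{22})=113^{2}-400\cdot22=63^{2}$, together with $c-a=120\sqrt{22}$ and $d-a=15(19+4\sqrt{22})$, hence $(c-a)(d-a)=1800\sqrt{22}\,(19+4\sqrt{22})$. Using $a=3\cdot63^{2}/(113+20\sqrt{22})$ to rewrite $a^{-1/4}$, one pushes the $L_1$-coefficient through the identity $(19+4\sqrt{22})^{2}=\tfrac1{49}(113+20\sqrt{22})(169+36\sqrt{22})$ to reach $\tfrac{91\pi}{6\sqrt[4]{264(169+36\sqrt{22})}}$; the $L_2$-coefficient collapses similarly via $176+38\sqrt{22}=2\sqrt{22}(19+4\sqrt{22})$ to $\tfrac{(113-20\sqrt{22})^{3/4}\pi}{2\sqrt[4]{3}\sqrt{176+38\sqrt{22}}}$, and $1365=15\cdot91$ accounts for the $91$ in the numerator. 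I expect this nested-radical bookkeeping — reconciling the raw prefactors produced by \eqref{irtg4} with the tidy expressions in the statement — to be the only real obstacle; everything upstream is a mechanical application of \eqref{ell4b} and \eqref{irtg4}. A secondary point deserving care is the orientation of the integral in \eqref{ell4b} (its lower limit is the smaller root $a$), which must agree with the ordering built into \eqref{irtg4} so that the two contributions combine with the signs shown.
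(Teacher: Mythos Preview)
Your proposal is correct and follows exactly the route the paper takes: the paper simply says ``thanks to formulae \eqref{irtg4} and \eqref{ell4b}'' and leaves the splitting $5(273-u)u^{-1/4}=1365\,u^{-1/4}-5\,u^{3/4}$, the identification $a=3(113-20\sqrt{22}),\,b=84,\,c=3(113+20\sqrt{22}),\,d=624$, and the surd bookkeeping entirely to the reader. Your explicit verification of the ordering $a<b<c<d$, the positivity of the radicand on $[a,b]$, and the nested-radical identities (e.g.\ $(19+4\sqrt{22})^2=\tfrac{1}{49}(113+20\sqrt{22})(169+36\sqrt{22})$ and $176+38\sqrt{22}=2\sqrt{22}(19+4\sqrt{22})$) fills in precisely the details the paper omits.
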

where $L_1$ and $L_2$ are two Lauricella ${\rm F}_D^{(3)}$ functions:
\[
\begin{split}
L_1&={\rm F}^{(3)}_D\left( \left. 
\begin{array}{c}
\frac12;\frac14,\frac12,\frac12 \\[2mm]
1
\end{array}
\right| \frac{5}{567} \left(23-16
   \sqrt{22}\right),\frac{1}{2}-\frac{17}{8 \sqrt{22}},16
   \sqrt{22}-75\right)\\
L_2&={\rm F}^{(3)}_D\left( \left. 
\begin{array}{c}
\frac12;-\frac34,\frac12,\frac12 \\[2mm]
1
\end{array}
\right| \frac{5}{567} \left(23-16
   \sqrt{22}\right),\frac{1}{2}-\frac{17}{8 \sqrt{22}},16
   \sqrt{22}-75\right).
\end{split}
\] 
Although it is an elementary consequence of the just proved theorem, we deem it of some interest to carry out the following new formula for $\pi$ where $L_1$ and $L_2$ are the same as in theorem \ref{genus4}
\begin{cor}\label{genus4cor}
\[
\pi=\frac{12 \sqrt{22 \left(9+2 \sqrt{22}\right)}}{91 L_1\sqrt[4]{22} -L_2\sqrt{21569 \sqrt{22}-99440} }\,\boldsymbol{K}\left(\frac{1}{\sqrt2}\right)
\]
\end{cor}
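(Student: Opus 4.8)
The plan is to read the corollary as a purely algebraic rearrangement of Theorem~\ref{genus4}, solving that identity for $\pi$. Since its right-hand side is linear in $\pi$, I would first pull $\pi$ out as a common factor,
\[
\sqrt[4]{\tfrac43}\,\boldsymbol{K}\!\left(\tfrac1{\sqrt2}\right)=\pi\left(\frac{91\,L_1}{6\sqrt[4]{264\,(169+36\sqrt{22})}}-\frac{(113-20\sqrt{22})^{3/4}\,L_2}{2\sqrt[4]{3}\,\sqrt{176+38\sqrt{22}}}\right),
\]
and divide, so that $\pi$ equals $\sqrt[4]{4/3}\,\boldsymbol{K}(1/\sqrt2)$ over the bracket. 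What then remains is to bring this quotient into the exact shape stated.

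For that I would multiply numerator and denominator of the fraction by $6\sqrt[4]{5808\,(169+36\sqrt{22})}$, where $5808=22\cdot264$; this factor is engineered so that the coefficient of $L_1$ in the denominator becomes precisely $91\sqrt[4]{22}$. The numerator turns into $6\sqrt[4]{\tfrac43\cdot5808\,(169+36\sqrt{22})}=6\sqrt[4]{7744\,(169+36\sqrt{22})}$, and because $7744=88^2$ and $169+36\sqrt{22}=(9+2\sqrt{22})^2$ this collapses to $12\sqrt{22\,(9+2\sqrt{22})}$, the numerator in the statement. The $L_2$-coefficient in the denominator becomes $6\,(113-20\sqrt{22})^{3/4}\sqrt{99+22\sqrt{22}}\big/\sqrt{176+38\sqrt{22}}$, using $5808/3=1936=44^2$ together with $11\,(9+2\sqrt{22})=99+22\sqrt{22}$.

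The one step that is not bookkeeping, and the place I expect to spend real effort, is checking that this last quantity equals $\sqrt{21569\sqrt{22}-99440}$. I would square it and compute in $\mathbb{Q}(\sqrt{22})$: the crucial observation is that $113-20\sqrt{22}=(2\sqrt{22}-5)^2$, hence $(113-20\sqrt{22})^{3/2}=(2\sqrt{22}-5)^3=326\sqrt{22}-1445$; then $(326\sqrt{22}-1445)(99+22\sqrt{22})=484\sqrt{22}+14729$, and dividing $36\,(484\sqrt{22}+14729)$ by $176+38\sqrt{22}$ — rationalizing through the norm $176^2-22\cdot38^2=-792$ and cancelling $36/792=1/22$ — gives exactly $21569\sqrt{22}-99440$. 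All quantities in sight are positive, so taking the positive square root is legitimate, and substituting back produces the claimed formula for $\pi$; since $L_1$ and $L_2$ are verbatim the Lauricella functions of Theorem~\ref{genus4}, nothing further is needed.
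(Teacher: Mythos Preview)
Your proposal is correct and is exactly what the paper intends: the paper gives no proof at all, merely remarking that the corollary is ``an elementary consequence of the just proved theorem,'' i.e.\ Theorem~\ref{genus4} solved for $\pi$. You have supplied precisely those elementary details, and your arithmetic in $\mathbb{Q}(\sqrt{22})$ checks out line by line.
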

\subsection{Transformation of integrals of genus $\boldsymbol{g>4}$}
It is rather natural to try to go beyond, specially after the papers \cite{Belokolos1986, eilbeck1994elliptic, enol1994geometry,maier2008} where the possibility is hinted at of reducing hyperelliptic integrals of any genus. Unfortunately such reductions are not of practical use, due to the need to work with
definite integrals. For instance, following  theorem 4.1 of \cite{maier2008} with genus $g=5$ the change of variable transforming the complete first kind elliptic integral\[
\int_{-\sqrt{3}}^0\frac{{\rm d}y}{\sqrt{y^3-3y}}=\sqrt[4]{\frac{4}{3}}\,\boldsymbol{K}\left(\frac{1}{\sqrt2}\right)
\]
into a hyperelliptic integral $g=5$ is:
\[
y=\frac{\left(x^3-1584 x\right) \left(x^6+10242 x^4-21775959
   x^2+7971615000\right)^2}{225 \left(x^2-324\right) \left(x^6-5382
   x^4+ 4922937x^2+1771470000\right)^2}
\]
so that to compute the integration limits of the hyperelliptic integral is just outside of practicability.  The reason is that when applying the change of variables to the upper and lower limits of
integration, one must evaluate algebraic functions that are not expressible in terms of radicals.  That is, one must solve
polynomial equations which may be quintic, sextic, etc. which are in general not solvable by radicals.

\section{Conclusions}
In section 3 an autonomous and elementary proof of the Kummer  identity \eqref{ikummer} was established. 
Besides, this paper holds the proofs of a set of formulae providing the values at some special points, both inside/outside the unit disk, of multivariate hypergeometric  functions.
Among them we can conclusively cite e. g. the following evaluations.
First \eqref{even} evaluates a Lauricella of order $2m$;  \eqref{fd7b}--\eqref{fd7c} 
evaluate Lauricellas of index 7 through a complete elliptic integral of the
first kind; \eqref{bg00} evaluates an Appell function through values of Gamma; equation \eqref{5.9b} evaluates an Appell function again through incomplete elliptic integrals of the first kind;  and so on. The chief tools were the usual representation for such functions and the \textit{double evaluation method,} or established schemes for the reduction of hyperelliptic integrals.
Several new formulae were presented like that of Theorem \ref{genus4} and the Corollary \ref{genus4cor} evaluating $\pi$.  Finally, hyperelliptic integrals of increasing genus have been reduced via the classic reduction schemes and the more recent ones introduced in \cite{Belokolos1986}, \cite{eilbeck1994elliptic}, \cite{enol1994geometry} and \cite{maier2008}. 

We will end by saying that there is, as yet, no significant published
database of closed-form evaluations on Appell and Lauricella
functions, whether in print or on-line.  Volume 3 of Integrals and Series compendium, \cite{prudn}
and the Wolfram Functions site (functions.wolfram.com), both include extensive tables of
evaluations of univariate hypergeometric functions, but
multivariate ones remain to be treated.  Therefore, the
evaluations in this paper seem to break new ground.





\end{document}